\theoremstyle{plain}
\newtheorem{Thm}{Theorem}[section]
\newtheorem{Lem}[Thm]{Lemma}
\newtheorem{Cor}[Thm]{Corollary}
\theoremstyle{definition}
\numberwithin{equation}{section}
\renewcommand\section{\@startsection {section}{1}{\z@}%
                                   {-3.5ex \@plus -1ex \@minus -.2ex}%
                                   {2.3ex \@plus.2ex}%
                                   {\normalfont\large\bf}}
\renewcommand\subsection{\@startsection {subsection}{1}{\z@}%
                                   {-3.5ex \@plus -1ex \@minus -.2ex}%
                                   {2.3ex \@plus.2ex}%
                                   {\normalfont\normalsize\bf}}
\begin{document}

\begin{center}
{\Large \bf 
Functional central limit theorems for the dynamic elephant random walk
}
\end{center}
\begin{center}
Go Tokumitsu
\end{center}

\begin{abstract}
We prove functional central limit theorems for the dynamic elephant random walk in the $\sqrt{n}$ and $\sqrt{n\log n}$ orders, by applying the martingale convergence theorem and Karamata's theory of regular variation. 
\end{abstract}


\section{Introduction}
The Elephant Random Walk (ERW), introduced by Sch\"utz and Trimper \cite{Sch} in 2004, is a one-dimensional discrete-time process with infinite memory that provides a simple example of anomalous diffusion and exhibits a phase transition; see \cite{BaurBertoin}, \cite{Ber}, \cite{Col2}, \cite{KaT} for the details. The Dynamic Random Walk (DRW), proposed by Guillotin \cite{Gui} in 2000, is a one-dimensional discrete-time random walk whose transition probabilities are governed by the orbit of a discrete-time dynamical system; we refer to \cite{Gui2} for more details. Coletti, de Lima, Gava, and Luiz \cite{Col} introduced the Dynamic Elephant Random Walk (DERW) in 2020, which randomly combines the ERW and DRW models. Their work revealed that the DERW exhibits another phase transition in addition to the ERW phase transition.

In this paper, we prove functional central limit theorems for the DERW in the $\sqrt{n}$ and $\sqrt{n\log n}$ orders. For this purpose we investigate the asymptotic behavior of characteristic sequences, following the approach of Shiozawa \cite{Sio}. Then, we apply the martingale convergence theorem of Durrett--Resnick \cite{DurrettResnick} to establish the limit theorems.
\subsection{The definition of the dynamic elephant random walk}
We now introduce an equivalent formulation of the DERW, as given in Tokumitsu--Yano \cite{TokumitsuYano2025Pre}. Let $p,q\in[0,1]$ be constants and let $\alpha=\{\alpha_n\}_{n=1}^\infty$ and $\beta=\{\beta_n\}_{n=1}^\infty$ be sequences of $[0,1]$. The DERW is a stochastic process $\{S_n\}_{n=0}^\infty$ on a probability space $(\Omega,\mathcal{F},P)$ which takes values in $\mathbb{Z}$ such that $S_0:=0$ and the increment $X_n:=S_n-S_{n-1}$ for $n\ge1$ takes values in $\{-1,+1\}$ and satisfies
\begin{align}\notag
E[X_1]=\alpha_1(2q-1)+(1-\alpha_1)(2\beta_1-1)
\end{align}
and
\begin{align}\notag
E[X_{n+1}|\mathcal{F}_n^X]=\frac{\alpha_{n+1}(2p-1)}{n}\cdot S_n+(1-\alpha_{n+1})(2\beta_{n+1}-1)\quad (n\ge1).
\end{align}
Here $\mathcal{F}_n^X:=\sigma(X_1,\ldots,X_n)$ denotes the natural filtration generated by $\{X_n\}_{n=1}^\infty$.

The DERW generalizes the ERW, as well as the models studied by Shiozawa \cite{Sio} and Kubota--Takei \cite{KaT}. For the details, see \cite{TokumitsuYano2025Pre}. 

Let us state our main results. Let $D([0,\infty))$ denote the Skorokhod space, i.e., the set of right-continuous functions with left-hand limits equipped with the Skorokhod topology. The following functional central limit theorem in the $\sqrt{n}$ order.
\begin{Thm}\label{mThm1}
Let $\{S_n\}_{n=0}^\infty$ be the DERW. Assume that $\alpha_n\to\alpha$ and $\beta_n\to\beta$, and suppose one of the following five conditions:
\begin{enumerate}
\item $p< 3/4$, $\alpha>0$.
\item $p=3/4$, $0<\alpha<1$.
\item $3/4<p<1$, $0<\alpha<\frac{1}{4p-2}$.
\item $p=1$, $0<\alpha<\frac{1}{4p-2}$, $0<\beta<1$.
\item $\alpha=0$, $0<\beta<1$.
\end{enumerate}
Then we have the following distributional convergence in $D([0,\infty)):$
\begin{align}\label{FCLT}
\left(\frac{S_{[nt]}-E[S_{[nt]}]}{\sqrt{n}},t\ge0\right)\Longrightarrow(W_t,t\ge0),
\end{align}
where $(W_t,t\ge0)$ is a $\mathbb{R}$-valued centered Gaussian process starting at the origin with covariance given by 
\begin{align}\notag
E[W_sW_t]=C_{\alpha,\beta,p}\cdot s\left(\frac{t}{s}\right)^{\alpha(2p-1)},
\end{align}
for all $0<s\le t$. In particular,
\begin{align}\notag
\frac{S_n-E[S_n]}{\sqrt{n}}\overset{d}{\to}N\left(0,C_{\alpha,\beta,p}\right).
\end{align}
Here,
\begin{align}\label{Const}
C_{\alpha,\beta,p}:=\frac{1}{1-2\alpha(2p-1)}\cdot\left(1-\left(\frac{(1-\alpha)(2\beta-1)}{(1-\alpha(2p-1)}\right)^2\right).
\end{align}
\end{Thm}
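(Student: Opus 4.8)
The plan is to realize $S_n-E[S_n]$ as a deterministic multiple of a genuine $(\mathcal{F}_n^X)$-martingale, apply the Durrett--Resnick functional martingale limit theorem to the suitably rescaled martingale, and then recover the weight by a deterministic change of scale. Write $\theta:=\alpha(2p-1)$ and observe that in each of the five cases (i)--(v) one has $\theta<1/2$, which is exactly the diffusive regime. First I would set $a_1:=1$ and $a_n:=\prod_{k=1}^{n-1}\bigl(1+\tfrac{\alpha_{k+1}(2p-1)}{k}\bigr)$, so that the recursion $E[S_{n+1}\mid\mathcal{F}_n^X]=\tfrac{n+\alpha_{n+1}(2p-1)}{n}S_n+(1-\alpha_{n+1})(2\beta_{n+1}-1)$ becomes, after centering by $m_n:=E[S_n]$, the identity $E[S_{n+1}-m_{n+1}\mid\mathcal{F}_n^X]=\tfrac{a_{n+1}}{a_n}(S_n-m_n)$. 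Hence $M_n:=(S_n-m_n)/a_n$ is a martingale with bounded increments $\Delta M_n=\xi_n/a_n$, where $\xi_n:=X_n-E[X_n\mid\mathcal{F}_{n-1}^X]$ satisfies $|\xi_n|\le2$ and $E[\xi_n^2\mid\mathcal{F}_{n-1}^X]=1-(E[X_n\mid\mathcal{F}_{n-1}^X])^2$.

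Second, I would pin down the growth of $a_n$. Since $\alpha_n\to\alpha$, the factors behave like $1+\theta/k+o(1/k)$, and Karamata's theory of regular variation gives $a_n\sim C_a\,n^{\theta}$ for some $C_a>0$, together with the locally uniform statement $a_{[nt]}/n^{\theta}\to C_a t^{\theta}$ on compact $t$-intervals needed below. Because $\theta<1/2$, the martingale does not converge: its predictable quadratic variation $\langle M\rangle_n=\sum_{k\le n}E[\xi_k^2\mid\mathcal{F}_{k-1}^X]/a_k^2$ grows like $n^{1-2\theta}$, so the correct normalization of $M$ is $n^{1/2-\theta}$.

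The core step is the functional martingale limit theorem applied to the array $X_{n,k}:=\xi_k/(a_k\,n^{1/2-\theta})$, whose partial sums are $M_{[nt]}/n^{1/2-\theta}$. The Lindeberg/negligibility condition is immediate from $|X_{n,k}|\le 2/(a_k n^{1/2-\theta})\le 2/(a_1 n^{1/2-\theta})\to0$, so for large $n$ no jump exceeds a fixed $\varepsilon$. For the characteristic one must show $\sum_{k\le[nt]}E[X_{n,k}^2\mid\mathcal{F}_{k-1}^X]\to\sigma^2(t)$ in probability for a deterministic $\sigma^2$; this reduces to $E[\xi_k^2\mid\mathcal{F}_{k-1}^X]\to v$, which follows from the law of large numbers $S_{k-1}/(k-1)\to\mu:=\tfrac{(1-\alpha)(2\beta-1)}{1-\alpha(2p-1)}$ (obtained from $m_k\sim\mu k$ together with the $L^2$ bound $E[(S_k-m_k)^2]=a_k^2\,E[M_k^2]=O(k)$). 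One then gets $v=1-\mu^2$ and, via a Toeplitz/Kronecker summation using $a_k\sim C_a k^{\theta}$, the value $\sigma^2(t)=\tfrac{v}{C_a^2(1-2\theta)}\,t^{1-2\theta}$. Durrett--Resnick then yields $M_{[nt]}/n^{1/2-\theta}\Rightarrow\tilde W_t$ in $D([0,\infty))$, with $\tilde W$ a centered Gaussian process with independent increments and $\mathrm{Var}(\tilde W_t)=\sigma^2(t)$.

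Finally, writing $\tfrac{S_{[nt]}-m_{[nt]}}{\sqrt n}=\tfrac{a_{[nt]}}{n^{\theta}}\cdot\tfrac{M_{[nt]}}{n^{1/2-\theta}}$ and invoking $a_{[nt]}/n^{\theta}\to C_a t^{\theta}$ locally uniformly, a continuous-mapping argument (multiplication by a deterministic continuous factor, whose product with the continuous limit $\tilde W$ is again continuous) gives convergence in $D([0,\infty))$ to $W_t:=C_a t^{\theta}\tilde W_t$. Since $\tilde W$ has independent increments, for $0<s\le t$ one obtains $E[W_sW_t]=C_a^2 s^{\theta}t^{\theta}\sigma^2(s)=\tfrac{v}{1-2\theta}s^{1-\theta}t^{\theta}=\tfrac{v}{1-2\theta}\,s(t/s)^{\theta}$; the cancellation of $C_a^2$ shows the limit law is independent of the precise value of $C_a$, and substituting $v=1-\mu^2$ and $\theta=\alpha(2p-1)$ recovers exactly the constant $C_{\alpha,\beta,p}$ in \eqref{Const}, with the one-dimensional statement following by taking $s=t=1$. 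I expect the main obstacle to be the interplay between the deterministic weight and the martingale limit: establishing the regular-variation asymptotics of $a_n$ with enough uniformity in $t$, and justifying the final multiplication step in the Skorokhod topology, are the delicate points, as is verifying that the five parameter regimes all collapse to the single condition $\theta<1/2$ under which $\sigma^2(t)$ is finite and nondegenerate.
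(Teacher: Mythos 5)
Your overall strategy coincides with the paper's (center and divide by $a_n$ to get a martingale, apply Durrett--Resnick to a triangular array of its increments, then undo the deterministic weight by a continuous-mapping argument), but there is a genuine gap at the step where you claim that ``Karamata's theory of regular variation gives $a_n\sim C_a n^{\theta}$ for some $C_a>0$.'' Under the sole hypothesis $\alpha_n\to\alpha$ this is false: what is true (and what the paper proves in Lemma \ref{Lem1}) is only that $a_n\sim n^{\theta}\ell_n/\Gamma(\theta+1)$ with $\ell_n$ \emph{slowly varying}, and $\ell_n$ need not converge to a positive constant. For instance, if $\alpha_k=\alpha+1/\log k$ for large $k$ (with $0<\alpha<1$, $p>1/2$), then $\log\ell_n\approx(2p-1)\sum_k 1/(k\log k)\approx(2p-1)\log\log n$, so $a_n\asymp n^{\theta}(\log n)^{2p-1}$ and no constant $C_a$ exists. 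Convergence of $a_n/n^{\theta}$ would require a summability condition such as $\sum_n|\alpha_n-\alpha|/n<\infty$, which the theorem does not assume.

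This error is not cosmetic; it invalidates both of your normalizations simultaneously. With your array $X_{n,k}=\xi_k/(a_k n^{1/2-\theta})$, the conditional variance sum behaves like $A_{[nt]}^2/n^{1-2\theta}\sim C_{\alpha,\beta,p}\,\Gamma(\theta+1)^2\,t^{1-2\theta}\,\ell_{[nt]}^{-2}$ (by the paper's Lemma \ref{Lem2}), which in the example above tends to $0$; thus Durrett--Resnick condition (a) holds only with a degenerate limit, and your claimed $\sigma^2(t)$ is wrong. Correspondingly, your final factor $a_{[nt]}/n^{\theta}$ diverges, so the last step is an indeterminate $\infty\cdot 0$ product. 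The repair is exactly what the paper does: normalize the martingale by $A_n$ rather than by the pure power $n^{1/2-\theta}$, verify condition (a) in the ratio form $A_{[nt]}^2/A_n^2\to t^{1-2\theta}$ (where slow variation of $\ell$ makes $\ell_{[nt]}/\ell_n\to1$), and convert at the end via $a_{[nt]}A_n\sim\sqrt{C_{\alpha,\beta,p}}\,t^{\theta}\sqrt{n}$ --- the slowly varying factors cancel in this \emph{product} even though neither factor has the asymptotics you assumed. Two smaller points: your Lindeberg bound uses $a_k\ge a_1=1$, which fails when $p<1/2$ (the factors of $a_k$ are then less than $1$); this is repairable via the regular variation of $a_k$, and the paper instead quotes this condition from the proof of Theorem 3 in \cite{Col}. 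Also, your law of large numbers step needs almost sure (not just $L^2$) control to mimic the paper's use of Theorem 1 of \cite{Col}, though for the in-probability convergence in condition (a) your $L^2$ bound can be made to suffice since all terms are bounded.
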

Condition (i) is called the \emph{diffusive} case, conditions (iii) and (iv) are called the \emph{superdiffusive} case, condition (v) is called the \emph{asymptotically no elephant component} case. Condition (ii) corresponds to the critical case.

For another critical case [$3/4\le p\le1$ and $\alpha_n\equiv\frac{1}{4p-2}$], we obtain the following functional central limit theorem in the $\sqrt{n\log n}$ order.
\begin{Thm}\label{mThm2}
Let $\{S_n\}_{n=0}^\infty$ be the DERW. Assume that $\alpha_n\equiv \frac{1}{4p-2}$ and $\beta_n\to\beta$, and suppose one of the following two conditions:
 \begin{enumerate}
\item $3/4\le p<1$.
\item $p=1$, $0<\beta<1$.
\end{enumerate}
Then we have the following distributional convergence in $D([0,\infty)):$
\begin{align}\label{FCLT2}
\left(\frac{S_{[n^t]}-E[S_{[n^t]}]}{\sqrt{n^t\log n}},t\ge0\right)\Longrightarrow\left(\sqrt{C_{\beta,p}'}B_t,t\ge0\right),
\end{align}
where $(B_t,t\ge0)$ is a standard Brownian motion. In particular,
\begin{align}
\frac{S_n-E[S_n]}{\sqrt{n\log n}}\overset{d}{\to}N\left(0,C_{\beta,p}'\right).
\end{align}
Here,
\begin{align}\notag
C_{\beta,p}':=1-4\left(1-\frac{1}{4p-2}\right)^2(2\beta-1)^2.
\end{align}
\end{Thm}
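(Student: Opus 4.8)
The plan is to recast \eqref{FCLT2} as a martingale functional central limit theorem after a logarithmic time change. Since $\alpha_n\equiv\tfrac1{4p-2}$ forces $\alpha_n(2p-1)=\tfrac12$, writing $Y_n:=S_n-E[S_n]$ the defining relation gives
\begin{align}\notag
E[Y_{n+1}\mid\cF_n^X]=\Big(1+\tfrac1{2n}\Big)Y_n\qquad(n\ge1).
\end{align}
Hence, with $a_n:=\prod_{k=1}^{n-1}\big(1+\tfrac1{2k}\big)=\tfrac{2\Gamma(n+1/2)}{\sqrt\pi\,\Gamma(n)}$, the sequence $M_n:=Y_n/a_n$ is an $\{\cF_n^X\}$-martingale with increments
\begin{align}\notag
M_n-M_{n-1}=\frac{X_n-E[X_n\mid\cF_{n-1}^X]}{a_n}.
\end{align}
By Stirling's formula $a_n\sim\kappa\sqrt n$ with $\kappa=2/\sqrt\pi$, so $a_n$ is regularly varying of index $1/2$; this is precisely the borderline index at which $\sum_k a_k^{-2}$ diverges only logarithmically, which produces the $\sqrt{n\log n}$ normalisation.

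First I would pin down the deterministic drift. Writing $b_k:=(1-\alpha_k)(2\beta_k-1)\to(1-\alpha)(2\beta-1)$, the mean satisfies $E[S_n]/a_n=E[X_1]+\sum_{k=2}^{n}b_k/a_k$, so a Toeplitz argument together with the Karamata estimate $\sum_{k\le n}a_k^{-1}\sim2\kappa^{-1}\sqrt n$ yields $E[S_n]/n\to L:=2(1-\alpha)(2\beta-1)$. As $E[M_n^2]=O(\log n)$, the fluctuation obeys $a_nM_n/n=o(1)$ in $L^2$, whence $S_n/n\to L$ in probability. Consequently $E[X_k\mid\cF_{k-1}^X]=\tfrac1{2(k-1)}S_{k-1}+b_k\to\mu:=\tfrac L2+(1-\alpha)(2\beta-1)=L$ in probability, and the conditional variances satisfy $1-(E[X_k\mid\cF_{k-1}^X])^2\to\sigma^2:=1-\mu^2=C_{\beta,p}'$. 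Applying a weighted Toeplitz lemma to the bounded summands of the predictable quadratic variation
\begin{align}\notag
\langle M\rangle_n=\sum_{k=1}^n\frac{1-(E[X_k\mid\cF_{k-1}^X])^2}{a_k^2},
\end{align}
together with the Karamata estimate $\sum_{k\le n}a_k^{-2}\sim\tfrac\pi4\log n$, gives $\langle M\rangle_n/\log n\to\sigma^2\tfrac\pi4$ in probability.

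Next I would carry out the time change $t\mapsto[n^t]$ and set $\tilde M^{(n)}_t:=\tfrac{\kappa}{\sqrt{\log n}}M_{[n^t]}$. Because $\log[n^t]\sim t\log n$, the previous step yields, for each $t\ge0$,
\begin{align}\notag
\langle\tilde M^{(n)}\rangle_t=\frac{\kappa^2}{\log n}\langle M\rangle_{[n^t]}\longrightarrow\kappa^2\sigma^2\tfrac\pi4\,t=\sigma^2t=C_{\beta,p}'\,t
\end{align}
in probability, using $\kappa^2=4/\pi$. The jumps are uniformly negligible since $a_k$ is increasing and $|X_k-E[X_k\mid\cF_{k-1}^X]|\le2$, so $\sup_{t}|\tilde M^{(n)}_t-\tilde M^{(n)}_{t-}|\le\tfrac{2\kappa}{\sqrt{\log n}}\to0$. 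The Durrett--Resnick martingale functional central limit theorem \cite{DurrettResnick} then gives $\tilde M^{(n)}\Longrightarrow\sqrt{C_{\beta,p}'}\,B$ in $D([0,\infty))$. Finally, from the identity
\begin{align}\notag
\frac{S_{[n^t]}-E[S_{[n^t]}]}{\sqrt{n^t\log n}}=\frac{a_{[n^t]}}{\kappa\sqrt{n^t}}\,\tilde M^{(n)}_t,
\end{align}
and the fact that $a_{[n^t]}/(\kappa\sqrt{n^t})\to1$ uniformly on compact subsets of $(0,\infty)$, a converging-together argument in $D([0,\infty))$ delivers \eqref{FCLT2}.

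The step I expect to be the main obstacle is the rigorous verification of the Durrett--Resnick hypotheses in the presence of the \emph{random} conditional variances: one must upgrade the pointwise convergence $\langle\tilde M^{(n)}\rangle_t\to C_{\beta,p}'t$ to the form the theorem requires, which rests entirely on the law of large numbers $S_n/n\to L$ being strong enough to drive the weighted Toeplitz averaging of the bounded quantities $1-(E[X_k\mid\cF_{k-1}^X])^2$. A secondary difficulty is the behaviour near $t=0$, where $[n^t]$ stays bounded and the ratio $a_{[n^t]}/(\kappa\sqrt{n^t})$ does not tend to $1$; there one checks separately that $(S_{[n^t]}-E[S_{[n^t]}])/\sqrt{n^t\log n}\to0$, so that the approximation remains valid on all of $[0,\infty)$ and the limit process starts at the origin.
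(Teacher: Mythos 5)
Your proposal is correct and follows essentially the same route as the paper: the martingale $M_n=(S_n-E[S_n])/a_n$, verification of the Durrett--Resnick conditions (the conditional-variance sum handled via the law of large numbers $S_n/n\to L$ together with the Karamata/Stirling asymptotics $a_n\sim\kappa\sqrt{n}$ and $\sum_{k\le n}a_k^{-2}\sim\tfrac{\pi}{4}\log n$, and the Lindeberg condition via the uniform bound $2/\sqrt{\log n}$ on the rescaled increments), followed by multiplication with the deterministic factor $a_{[n^t]}/\sqrt{n^t\log n}$. The minor deviations --- normalizing by $\sqrt{\log n}$ where the paper uses $A_n$, and deriving the needed LLN self-containedly in $L^2$ (in probability) instead of citing Coletti et al. --- do not change the argument; incidentally, your constant $\kappa=2/\sqrt{\pi}$ is the correct one (the paper's ``$a_n\sim 4\sqrt{n}/\pi$'' is a harmless slip, since the constant cancels in the final rescaling exactly as in your computation).
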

\subsection{Previous results}
The following theorem states the functional central limit theorem for the ERW in the $\sqrt{n}$ order.
\begin{Thm}[Baur--Bertoin {\cite[Theorem 1]{BaurBertoin}}]\label{FCLTERW1}
Let $\{S_n\}_{n=0}^\infty$ be the ERW with $p<3/4$. Then we have the following distributional convergence. in $D([0,\infty)):$
\begin{align}\notag
\left(\frac{S_{[nt]}}{\sqrt{n}},t\ge0\right)\Longrightarrow(W_t,t\ge0),
\end{align}
where $(W_t,t\ge0)$ is a $\mathbb{R}$-valued centered Gaussian process starting at the origin with covariance given by 
\begin{align}\notag
E[W_sW_t]=\frac{s}{3-4p}\left(\frac{t}{s}\right)^{2p-1},
\end{align}
for all $0<s\le t$.
\end{Thm}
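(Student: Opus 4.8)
The plan is to realize $S_n$ through an associated martingale and then invoke the functional martingale central limit theorem of Durrett--Resnick. Write $\gamma:=2p-1$, so that the hypothesis $p<3/4$ becomes $\gamma<1/2$ and the defining relation reads $E[X_{n+1}\mid\mathcal{F}_n^X]=\frac{\gamma}{n}S_n$. Setting
\begin{align}\notag
a_n:=\frac{\Gamma(n)\Gamma(1+\gamma)}{\Gamma(n+\gamma)},\qquad M_n:=a_nS_n,
\end{align}
a direct computation using $E[S_{n+1}\mid\mathcal{F}_n^X]=\frac{n+\gamma}{n}S_n$ shows that $\{M_n\}$ is an $\{\mathcal{F}_n^X\}$-martingale, with increments $\Delta M_{n+1}=a_{n+1}\bigl(X_{n+1}-E[X_{n+1}\mid\mathcal{F}_n^X]\bigr)$. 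First I would record the Gamma-function (equivalently Karamata) asymptotics $a_n\sim\Gamma(1+\gamma)\,n^{-\gamma}$ as $n\to\infty$.

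Next I would analyze the predictable quadratic variation. Since $X_{n+1}\in\{-1,+1\}$,
\begin{align}\notag
E\bigl[(\Delta M_{n+1})^2\mid\mathcal{F}_n^X\bigr]=a_{n+1}^2\Bigl(1-\tfrac{\gamma^2}{n^2}S_n^2\Bigr),
\end{align}
so that $\langle M\rangle_n=\sum_{k=2}^n a_k^2\bigl(1-\frac{\gamma^2}{(k-1)^2}S_{k-1}^2\bigr)$. The crucial feature of the diffusive regime is that the correction term is negligible: solving the recursion $E[S_{n+1}^2\mid\mathcal{F}_n^X]=(1+\frac{2\gamma}{n})S_n^2+1$ yields $E[S_n^2]=O(n)$ precisely when $\gamma<1/2$, whence $\sum_k a_k^2\frac{\gamma^2}{(k-1)^2}E[S_{k-1}^2]=O\bigl(\sum_k k^{-2\gamma-1}\bigr)<\infty$. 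Using $a_k^2\sim\Gamma(1+\gamma)^2k^{-2\gamma}$ and $\sum_{k\le m}k^{-2\gamma}\sim\frac{m^{1-2\gamma}}{1-2\gamma}$, I would conclude that for each fixed $t>0$,
\begin{align}\notag
n^{2\gamma-1}\langle M\rangle_{[nt]}\;\xrightarrow{\;P\;}\;V(t):=\frac{\Gamma(1+\gamma)^2}{1-2\gamma}\,t^{1-2\gamma}.
\end{align}

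With these variance asymptotics in hand I would verify the hypotheses of the Durrett--Resnick functional martingale CLT for the rescaled process $N_n(t):=n^{\gamma-1/2}M_{[nt]}$. The conditional-variance condition is exactly the display above, while the negligibility (conditional Lindeberg) condition follows from the uniform bound $|\Delta M_k|\le 2a_k$, which gives $\max_{k\le[nt]}n^{\gamma-1/2}a_k\to0$. Hence $N_n\Rightarrow B_{V(\cdot)}$ in $D([0,\infty))$, where $B$ is a standard Brownian motion and $B_{V(\cdot)}$ is the associated continuous Gaussian martingale. Finally, since $\frac{S_{[nt]}}{\sqrt n}=\frac{M_{[nt]}}{a_{[nt]}\sqrt n}=\frac{([nt])^\gamma}{\Gamma(1+\gamma)\sqrt n}M_{[nt]}(1+o(1))$, multiplication by the continuous deterministic factor $t\mapsto t^\gamma/\Gamma(1+\gamma)$ together with the continuous mapping theorem yields
\begin{align}\notag
\left(\frac{S_{[nt]}}{\sqrt n},\,t\ge0\right)\Longrightarrow\left(\frac{t^\gamma}{\Gamma(1+\gamma)}B_{V(t)},\,t\ge0\right)=:(W_t,\,t\ge0),
\end{align}
and a short computation $E[W_sW_t]=\frac{s^\gamma t^\gamma}{\Gamma(1+\gamma)^2}V(s)=\frac{s}{1-2\gamma}(t/s)^{\gamma}$ for $0<s\le t$ recovers, via $1-2\gamma=3-4p$, the stated covariance.

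The main obstacle I expect lies in the passage from the martingale convergence $N_n\Rightarrow B_{V(\cdot)}$ to the statement for $S_{[nt]}/\sqrt n$ in the Skorokhod topology: the rescaling factor $([nt])^\gamma/(\Gamma(1+\gamma)\sqrt n)$ is genuinely $t$-dependent and its asymptotic equivalence with $a_{[nt]}^{-1}/\sqrt n$ degenerates for small $[nt]$, so controlling this uniformly on compact time intervals (and near $t=0$) is delicate. A secondary technical point is upgrading the finite-dimensional conditional-variance convergence to the full functional input required by Durrett--Resnick, which again rests on the a priori bound $E[S_n^2]=O(n)$ specific to the diffusive phase $\gamma<1/2$.
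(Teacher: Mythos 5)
Your proposal takes essentially the same route as the paper: the paper obtains this statement as the special case $\alpha_n\equiv 1$ of its Theorem \ref{mThm1}, whose proof is precisely the Durrett--Resnick functional martingale CLT applied to the rescaled martingale $(S_n-E[S_n])/a_n$, with the conditional-variance asymptotics coming from regular variation of $a_n$ and the conditional Lindeberg condition from the deterministic bound $|Y_n|\le 2/a_n$. Your martingale $M_n=a_nS_n$, your normalization $n^{\gamma-1/2}$, and your final deterministic rescaling by $t^\gamma/\Gamma(1+\gamma)$ are the same ingredients in equivalent packaging, and your covariance computation correctly recovers $\frac{s}{3-4p}(t/s)^{2p-1}$.

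Two repairs are needed, one of which is a genuine (if local) error. First, your claim that $\sum_k a_k^2\frac{\gamma^2}{(k-1)^2}E[S_{k-1}^2]=O\bigl(\sum_k k^{-2\gamma-1}\bigr)<\infty$ fails whenever $p\le 1/2$, i.e.\ $\gamma=2p-1\le 0$: at $p=1/2$ the majorant is the harmonic series, and for $\gamma<0$ it diverges like $m^{-2\gamma}$. The theorem covers all $p<3/4$, so this case cannot be discarded. The conclusion you need survives, but for a different reason: the partial sums up to $[nt]$ are $O\bigl((nt)^{-2\gamma}\bigr)$ (respectively $O(\log n)$ at $\gamma=0$), which is $o\bigl(n^{1-2\gamma}\bigr)$, so after multiplying by $n^{2\gamma-1}$ the nonnegative correction term still tends to $0$ in $L^1$, hence in probability. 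The paper avoids this bookkeeping entirely by using the strong law $S_n/n\to 0$ a.s.\ (so $1-E[X_k\mid\mathcal{F}_{k-1}^X]^2\to 1$ a.s.) together with a Ces\`aro/Karamata argument, which treats all $p<3/4$ uniformly. Second, with $M_0=0$ your $\{M_n\}_{n\ge 0}$ is not a martingale at the first step when the ERW's initial step is biased ($E[X_1]=2q-1\ne 0$), so $\{n^{\gamma-1/2}\Delta M_k\}_k$ is not a martingale difference array at $k=1$; apply Durrett--Resnick to $M_{[nt]}-M_1$ and note $n^{\gamma-1/2}M_1\to 0$. The same elementary drift estimate, $\sup_{t\le T}E[S_{[nt]}]/\sqrt{n}\to 0$ for $\gamma<1/2$, is what reconciles your uncentered statement with the centered one in the paper's Theorem \ref{mThm1}; it deserves a line in either formulation.
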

The following theorem states the functional central limit theorem for the ERW in the $\sqrt{n\log n}$ order.
\begin{Thm}[Baur--Bertoin {\cite[Theorem 2]{BaurBertoin}}]\label{FCLTERW2}
Let $\{S_n\}_{n=0}^\infty$ be the ERW with $p=3/4$. Then we have the following distributional convergence. in $D([0,\infty)):$
\begin{align}\notag
\left(\frac{S_{[n^t]}}{\sqrt{n^t\log n}},t\ge0\right)\Longrightarrow(B_t,t\ge0).
\end{align}
\end{Thm}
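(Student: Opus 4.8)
The cleanest route is to recognise that Theorem \ref{FCLTERW2} is the special case of Theorem \ref{mThm2} obtained by taking $\alpha_n\equiv1$ with $p=3/4$: then $4p-2=1$, so the hypothesis $\alpha_n\equiv\frac1{4p-2}$ reads $\alpha_n\equiv1$ (the pure ERW, in which the $\beta$-component is inactive), and the constant collapses to $C'_{\beta,3/4}=1-4\,(1-\tfrac1{4p-2})^2(2\beta-1)^2=1$, while the centering $E[S_{[n^t]}]=O(n^{t/2})$ is negligible at the scale $\sqrt{n^t\log n}$. Since Theorem \ref{mThm2} is stated above, the conclusion follows at once. As this is a cited antecedent, I would also record the direct self-contained argument, which is the engine driving Theorem \ref{mThm2}.

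Set $a:=2p-1=\tfrac12$, so that $E[X_{n+1}\mid\mathcal{F}_n^X]=\frac{a}{n}S_n$. The plan is to linearise $S_n$ through a square-integrable martingale and then invoke the functional martingale central limit theorem of Durrett--Resnick \cite{DurrettResnick}. First I would introduce
\begin{align}\notag
\gamma_n:=\prod_{k=1}^{n-1}\Bigl(1+\frac{a}{k}\Bigr)^{-1}=\frac{\Gamma(n)\,\Gamma(1+a)}{\Gamma(n+a)},\qquad M_n:=\gamma_nS_n,
\end{align}
and verify that $(M_n)_{n\ge1}$ is an $(\mathcal{F}_n^X)$-martingale. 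From $E[S_{n+1}\mid\mathcal{F}_n^X]=(1+\tfrac an)S_n$ one gets $\Delta M_{n+1}=\gamma_{n+1}\bigl(X_{n+1}-E[X_{n+1}\mid\mathcal{F}_n^X]\bigr)$, hence the conditional variance $E[(\Delta M_{n+1})^2\mid\mathcal{F}_n^X]=\gamma_{n+1}^2\bigl(1-\frac{a^2}{n^2}S_n^2\bigr)$. By the regularly varying asymptotics of the Gamma function (Stirling), $\gamma_n\sim\Gamma(1+a)\,n^{-a}=\tfrac{\sqrt\pi}2\,n^{-1/2}$, so $\gamma_n^2\sim\tfrac\pi4\,n^{-1}$.

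Next I would pin down the quadratic variation. The crude bound $\langle M\rangle_n\le\sum_{k\le n}\gamma_k^2=O(\log n)$ with a maximal inequality gives the a priori estimate $S_n=o(n)$ a.s., whence $1-\frac{a^2}{n^2}S_n^2\to1$ a.s.; since $\gamma_k^2$ is regularly varying of index $-1$, Karamata's theorem gives $\sum_{k\le n}\gamma_k^2\sim\frac\pi4\log n$, so
\begin{align}\notag
\langle M\rangle_n=\sum_{k=1}^n\gamma_k^2\Bigl(1-\frac{a^2}{(k-1)^2}S_{k-1}^2\Bigr)\sim\frac\pi4\log n,
\end{align}
and therefore $\langle M\rangle_{[n^t]}/\log n\to\frac\pi4\,t$. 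Since $|\Delta M_k|\le2\gamma_k$ with $\sup_k\gamma_k<\infty$, one has $\max_{k\le[n^T]}|\Delta M_k|/\sqrt{\log n}\to0$, so the conditional Lindeberg condition holds trivially, and Durrett--Resnick applied to $(\Delta M_k/\sqrt{\log n})$ under the time change $k=[n^t]$ yields $(M_{[n^t]}/\sqrt{\log n})_{t\ge0}\Longrightarrow(\tfrac{\sqrt\pi}2 B_t)_{t\ge0}$ in $D([0,\infty))$. Transferring back through $S_{[n^t]}=\gamma_{[n^t]}^{-1}M_{[n^t]}$ and $\gamma_{[n^t]}^{-1}/\sqrt{n^t}\to\tfrac2{\sqrt\pi}$, a continuous-mapping argument then gives the claimed limit $(\tfrac2{\sqrt\pi}\cdot\tfrac{\sqrt\pi}2 B_t)=(B_t)$. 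I expect the genuine difficulty to lie not in the martingale algebra but in the exponential time change $t\mapsto[n^t]$: one must check that the quadratic-variation convergence and the normalisation $\gamma_{[n^t]}^{-1}/\sqrt{n^t}$ are uniform on compact $t$-intervals — in particular near $t=0$, where $[n^t]$ stays bounded — so that both the Durrett--Resnick limit and the continuous-mapping step are valid in the Skorokhod topology on all of $D([0,\infty))$.
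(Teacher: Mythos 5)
Your proposal is correct and takes essentially the same route as the paper: the paper itself does not reprove this cited result of Baur--Bertoin but observes exactly what you do, namely that it is the special case of Theorem \ref{mThm2} with $\alpha_n\equiv1=\frac{1}{4p-2}$ at $p=3/4$, where $C_{\beta,3/4}'=1$ and the centering $E[S_{[n^t]}]=O(n^{t/2})$ is negligible against $\sqrt{n^t\log n}$. Your supplementary self-contained argument (the Gamma-ratio martingale $M_n=\gamma_nS_n$, quadratic variation $\sim\frac{\pi}{4}\log n$, Durrett--Resnick with the time change $k=[n^t]$) is in substance the same machinery the paper uses to prove Theorem \ref{mThm2} itself, so it does not constitute a genuinely different method.
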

Theorem \ref{FCLTERW1} is a special case of Theorem \ref{mThm1}, and Theorem \ref{FCLTERW2} is a special case of Theorem \ref{mThm2}.

We will write for short 
\begin{align*}
\ell_{\inf}(\alpha):=\liminf_{n\to\infty}\alpha_n,\quad \ell_{\sup}(\alpha):=\limsup_{n\to\infty}\alpha_n,
\end{align*}
and
\begin{align*}
\ell_{\inf}(\beta):=\liminf_{n\to\infty}\beta_n,\quad \ell_{\sup}(\beta):=\limsup_{n\to\infty}\beta_n.
\end{align*}
Let $a_1:=1$ and
\begin{align*}
a_n:=\prod_{k=1}^{n-1}\left(1+\frac{(2p-1)\alpha_{k+1}}{k}\right)\quad \text{for}\ n\ge2.
\end{align*}

Set
\begin{align*}
A_n^2:=\sum_{k=1}^n\frac{1}{a_k^2}(1-E[X_k]^2),\quad B_n^2:=\sum_{k=1}^n\frac{1}{a_k^2}\quad \text{for $n\in\mathbb{N}\cup\{\infty\}$}.
\end{align*}
Coletti et al. \cite{Col} have obtained the following theorem concerning the Central Limit Theorem (CLT).
\begin{Thm}[Coletti et al. {\cite[Theorem 3, and Corollary 4]{Col}}]
Let $\{S_n\}_{n=0}^\infty$ be the DERW. Suppose one of the following four conditions:
\begin{enumerate}
\item $p\le 3/4$ and $\ell_{\inf}(\alpha)>0$.
\item $3/4<p<1$, $0<\ell_{\inf}(\alpha)\le\ell_{\sup}(\alpha)<\frac{1}{4p-2}$.
\item $p=1$ and $0<\ell_{\inf}(\alpha)\le\ell_{\sup}(\alpha)<1$, $0<\ell_{\inf}(\beta)\le\ell_{\sup}(\beta)<\frac{1-\ell_{\sup}(\alpha)}{1-\ell_{\inf}(\alpha)}$.
\item $0=\ell_{\inf}(\alpha)\le \ell_{\sup}(\alpha)<1$ and $0<\ell_{\inf}(\beta)\le\ell_{\sup}(\beta)<1-p\cdot\ell_{\sup}(\alpha)$.
\end{enumerate}
Then the following assertion hold:
\begin{align}\label{CLT}
\frac{S_n-E[S_n]}{a_nA_n}\overset{d}{\to}N(0,1).
\end{align}
\end{Thm}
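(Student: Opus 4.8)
The plan is to realize the centered and rescaled walk as a martingale and then invoke the martingale central limit theorem. Set $M_n := (S_n - E[S_n])/a_n$ with $M_0 := 0$; the aim \eqref{CLT} is precisely $M_n/A_n \overset{d}{\to} N(0,1)$. The sequence $a_n$ is engineered so that the factor $1+(2p-1)\alpha_{n+1}/n$ appearing in the conditional drift $E[S_{n+1}\mid\mathcal{F}_n^X]=(1+(2p-1)\alpha_{n+1}/n)S_n+(1-\alpha_{n+1})(2\beta_{n+1}-1)$ is cancelled on dividing by $a_{n+1}=a_n(1+(2p-1)\alpha_{n+1}/n)$. A direct computation then shows $(M_n)$ is an $(\mathcal{F}_n^X)$-martingale with increments $\Delta M_n=(X_n-E[X_n\mid\mathcal{F}_{n-1}^X])/a_n$, so that, since $X_n^2=1$, its conditional variances are $E[(\Delta M_n)^2\mid\mathcal{F}_{n-1}^X]=(1-E[X_n\mid\mathcal{F}_{n-1}^X]^2)/a_n^2$.

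Next I would record the deterministic asymptotics. From $\log a_n=\sum_{k<n}\log(1+(2p-1)\alpha_{k+1}/k)$ one sees that $(a_n)$ is regularly varying, with index $\alpha(2p-1)$ when $\alpha_n\to\alpha$ and, in general, squeezed between $(2p-1)\ell_{\inf}(\alpha)$ and $(2p-1)\ell_{\sup}(\alpha)$. Solving the mean recursion gives $E[X_k]\to m:=(1-\alpha)(2\beta-1)/(1-\alpha(2p-1))$, so the summand $(1-E[X_k]^2)/a_k^2$ is, up to slowly varying factors, of order $k^{-2(2p-1)\alpha}$. Karamata's theorem then fixes the growth of $A_n$: a power of $n$ in the diffusive case, $\sqrt{\log n}$ in the critical case, and — decisively — the hypotheses (i)–(iv) are exactly the constraints on the location of $p$ and on $\ell_{\inf}(\alpha),\ell_{\sup}(\alpha),\ell_{\inf}(\beta),\ell_{\sup}(\beta)$ that keep the total conditional variance \emph{divergent}, i.e.\ $A_n\to\infty$, which is what separates the present Gaussian regime from the superdiffusive one.

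With these asymptotics in hand I would check the two hypotheses of the Durrett–Resnick martingale CLT \cite{DurrettResnick} for the triangular array $\Delta M_k/A_n$, $1\le k\le n$. The Lindeberg/negligibility condition is the easy half: since $|\Delta M_k|\le 2/a_k$, it reduces to $\max_{k\le n}(a_k A_n)^{-1}\to0$, which follows from $A_n\to\infty$ and the regular variation of $a_n$. The substantive half is the convergence in probability of the normalized conditional variance, namely $A_n^{-2}\sum_{k\le n}(1-E[X_k\mid\mathcal{F}_{k-1}^X]^2)/a_k^2\to1$.

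This last step is where the main difficulty lies, because the conditional variance carries the random quantities $E[X_k\mid\mathcal{F}_{k-1}^X]$ whereas the normalizer $A_n^2$ is built from the deterministic $E[X_k]$. The plan is to exploit the identity $E[X_k\mid\mathcal{F}_{k-1}^X]-E[X_k]=\frac{(2p-1)\alpha_k}{k-1}(S_{k-1}-E[S_{k-1}])=\frac{(2p-1)\alpha_k}{k-1}a_{k-1}M_{k-1}$, so that the fluctuation of each conditional variance is driven by $M_{k-1}$, whose order is already known from the variance estimate, while the a priori bound $|E[X_k\mid\mathcal{F}_{k-1}^X]|\le1$ keeps every term controlled. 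Summing against $1/a_k^2$ and using the regular variation of $a_k$ together with $A_n\to\infty$, one shows $A_n^{-2}\sum_{k\le n}a_k^{-2}(E[X_k\mid\mathcal{F}_{k-1}^X]^2-E[X_k]^2)\to0$ in probability. The delicate bookkeeping is to perform this estimate uniformly across all four regimes when $\alpha_n$ and $\beta_n$ are controlled only through their $\liminf$ and $\limsup$ — which is exactly the role of the various boundary conditions on $\ell_{\inf}(\alpha),\ell_{\sup}(\alpha),\ell_{\inf}(\beta),\ell_{\sup}(\beta)$ in the statement.
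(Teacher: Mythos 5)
First, a point of context: the paper does not actually prove this statement --- it is quoted from Coletti et al.\ \cite{Col} as a previous result. The only internal benchmark is the paper's proofs of its own Theorems \ref{mThm1} and \ref{mThm2}, which follow exactly the scheme you propose: the martingale $M_n=(S_n-E[S_n])/a_n$, the array $X_{n,k}=Y_k/A_n$, and the two conditions of the Durrett--Resnick theorem \cite{DurrettResnick}. So your overall strategy is the right one and coincides with the method used both in the source and in this paper.

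The genuine gap is that your argument, as written, proves at best the convergent-parameter case, not the stated theorem. The hypotheses (i)--(iv) control $\alpha_n,\beta_n$ only through $\ell_{\inf}$ and $\ell_{\sup}$, so $\alpha_n$ and $\beta_n$ need not converge; then the objects your proof is built on simply do not exist: $E[X_k]$ need not converge to any limit $m$, the sequence $a_n$ need not be regularly varying (one only has two-sided polynomial bounds with \emph{different} exponents), and Karamata's theorem is not applicable, so the asymptotics you assert for $A_n^2$ (a power of $n$, or $\log n$) are unavailable. Consequently, the two substantive steps --- the negligibility bound $\max_{k\le n}(a_kA_n)^{-1}\to 0$ and the convergence $A_n^{-2}\sum_{k\le n}a_k^{-2}\left(E[X_k\mid\mathcal{F}_{k-1}^X]^2-E[X_k]^2\right)\to 0$ in probability --- must be carried out from two-sided estimates, and this is precisely what conditions (i)--(iv) are calibrated for (for instance, the $\beta$-constraints in (iii) and (iv) exist to keep $1-E[X_k]^2$, hence $A_n^2$ relative to $B_n^2$, nondegenerate, while no such constraint is needed in (i) and (ii)). You explicitly defer this as ``delicate bookkeeping,'' but that bookkeeping \emph{is} the content of the theorem beyond the convergent case, so the proof is incomplete exactly where the stated hypotheses matter. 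Two further remarks. The identity defining $A_n^2$ gives $A_n^{-2}\sum_{k\le n}a_k^{-2}(1-E[X_k]^2)=1$ exactly, so only the error terms $2E[X_k]\xi_k+\xi_k^2$, with $\xi_k:=E[X_k\mid\mathcal{F}_{k-1}^X]-E[X_k]$, need to vanish after normalization; for this, the route used in the paper (and in \cite{Col}) is the almost sure strong law $(S_n-E[S_n])/n\to 0$ (Theorem 1 of \cite{Col}), giving $\xi_k\to0$ a.s., which is more robust than your proposed variance estimate on $M_{k-1}$: the latter requires $a_k^2B_k^2/k^2\to0$, yet another growth estimate that is delicate without regular variation. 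Finally, your reduction of the Lindeberg condition is immediate only when $p\ge 1/2$ (then $a_k\ge 1$ and $|X_{n,k}|\le 2/A_n$); for $p<1/2$, which condition (i) allows, $a_k$ can tend to $0$ and the claim again needs a quantitative two-sided argument rather than an appeal to regular variation.
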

In addition to the phase transition at $p=3/4$, Coletti et al. \cite{Col} obtained another phase transition as the following theorem with a \emph{strong elephant component} in the sense that $\ell_{\inf}(\alpha)>\frac{1}{4p-2}$, where the CLT \eqref{CLT} breaks down.
\begin{Thm}[Coletti et al. {\cite[Theorem 7]{Col}}]\label{nondegThm}
Let $\{S_n\}_{n=0}^\infty$ be the DERW with $p>3/4$ and $\ell_{\inf}(\alpha)>\frac{1}{4p-2}$. Then
\begin{align}\label{nondeg}
\frac{S_n-E[S_n]}{a_n}\to M\quad \text{a.s. and in $L^2$},
\end{align} 
where $M$ is a non-degenerate zero mean random variable.
\end{Thm}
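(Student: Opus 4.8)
The plan is to exhibit the centered and rescaled walk as an $L^2$-bounded martingale and then invoke the martingale convergence theorem. First I would set $M_n:=(S_n-E[S_n])/a_n$ and verify the martingale property. Centering the defining relation $E[X_{n+1}\mid\mathcal{F}_n^X]=\frac{(2p-1)\alpha_{n+1}}{n}S_n+(1-\alpha_{n+1})(2\beta_{n+1}-1)$ (i.e.\ subtracting its expectation, which cancels the deterministic drift term) gives
\begin{align}\notag
E[(S_{n+1}-E[S_{n+1}])\mid\mathcal{F}_n^X]=\left(1+\frac{(2p-1)\alpha_{n+1}}{n}\right)(S_n-E[S_n]).
\end{align}
Since the factor on the right is exactly $a_{n+1}/a_n$ by the definition of $a_n$, dividing through shows $E[M_{n+1}\mid\mathcal{F}_n^X]=M_n$; in particular $E[M_n]=0$ for every $n$.

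Next I would estimate the increments. Writing $D_{n+1}:=X_{n+1}-E[X_{n+1}\mid\mathcal{F}_n^X]$, the identity above yields $M_{n+1}-M_n=D_{n+1}/a_{n+1}$, and because $X_{n+1}\in\{-1,+1\}$ the conditional variance satisfies $E[D_{n+1}^2\mid\mathcal{F}_n^X]=1-(E[X_{n+1}\mid\mathcal{F}_n^X])^2\le1$. Summing the orthogonal martingale increments then gives
\begin{align}\notag
E[M_n^2]=\sum_{k=1}^n\frac{E[D_k^2]}{a_k^2}\le\sum_{k=1}^n\frac{1}{a_k^2}=B_n^2,
\end{align}
so $\{M_n\}$ is bounded in $L^2$ as soon as $B_\infty^2<\infty$. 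Granting this, the martingale convergence theorem delivers $M_n\to M$ both almost surely and in $L^2$, and $L^2$-convergence preserves $E[M]=\lim_n E[M_n]=0$.

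The crux, and the step I expect to be the main obstacle, is the summability $B_\infty^2=\sum_k a_k^{-2}<\infty$, which is exactly where the hypothesis $\ell_{\inf}(\alpha)>\frac{1}{4p-2}$ enters. Because $(2p-1)\cdot\frac{1}{4p-2}=\frac12$, this hypothesis furnishes some $\delta>0$ and $N$ with $(2p-1)\alpha_{k+1}\ge\frac12+\delta$ for all $k\ge N$, so that $a_{k+1}/a_k\ge 1+\frac{1/2+\delta}{k}$ eventually. I would then compare $a_n$ with $\prod_{k\ge N}(1+\frac{1/2+\delta}{k})$, or equivalently apply Karamata-type estimates to $\log a_n=\sum_{k<n}\log(1+(2p-1)\alpha_{k+1}/k)$, to conclude $a_n\ge c\,n^{1/2+\delta}$ for some $c>0$ and hence $a_k^{-2}=O(k^{-1-2\delta})$, which is summable. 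The delicate point is that the theorem constrains only $\ell_{\inf}(\alpha)$, not the limit of $\alpha_n$, so one cannot read off a precise power asymptotic for $a_n$ and must argue instead through a one-sided comparison or through regular variation.

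It remains to check non-degeneracy, namely $E[M^2]>0$. Here I would use that $E[M_n^2]=\sum_{k=1}^n E[D_k^2]/a_k^2$ is non-decreasing with non-negative summands $E[D_k^2]=E[1-(E[X_k\mid\mathcal{F}_{k-1}^X])^2]\ge0$, so that $E[M^2]=\lim_n E[M_n^2]\ge E[D_1^2]=1-E[X_1]^2$. When $|E[X_1]|<1$ this is already positive, and otherwise I would pass to the first increment whose conditional variance is strictly positive, which exists because the increments are genuinely random in this regime. Being a centered $L^2$-limit with positive variance, $M$ is then a non-degenerate zero-mean random variable, completing the argument.
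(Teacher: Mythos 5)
This statement is not proved in the paper at all: it is imported verbatim from Coletti et al.\ \cite{Col} (their Theorem 7), so your proposal has to stand on its own. The convergence half of your argument is correct and is the standard route (and the same one used in \cite{Col}): centering the defining recursion indeed gives $E[S_{n+1}-E[S_{n+1}]\mid\mathcal{F}_n^X]=\bigl(1+\tfrac{(2p-1)\alpha_{n+1}}{n}\bigr)(S_n-E[S_n])$ with the factor equal to $a_{n+1}/a_n$, the orthogonal-increment bound $E[M_n^2]\le B_n^2$ is right, and since $p>3/4$ makes $2p-1>1/2$ while the hypothesis gives $(2p-1)\alpha_{k+1}\ge\tfrac12+\delta$ for all large $k$, your one-sided comparison yields $a_n\ge c\,n^{1/2+\delta}$, hence $B_\infty^2<\infty$, $L^2$-boundedness, and a.s.\ plus $L^2$ convergence to a centered limit $M$.

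The genuine gap is the non-degeneracy step. Your fallback claim that some increment has strictly positive conditional variance ``because the increments are genuinely random in this regime'' is unproved, and it is actually false under the quoted hypotheses alone: take $p=1$, $q=1$, $\alpha_n\equiv1$, so that $\ell_{\inf}(\alpha)=1>\tfrac{1}{4p-2}=\tfrac12$. Then $E[X_1]=2q-1=1$ forces $X_1=1$ a.s., and inductively $E[X_{n+1}\mid\mathcal{F}_n^X]=S_n/n=1$ forces $X_{n+1}=1$ a.s.; hence $S_n=n$ a.s.\ and $M\equiv0$ is degenerate. So no argument can close this step from the hypotheses as stated here (the original theorem in \cite{Col} relies on further restrictions that the quotation suppresses). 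What you can salvage: for $3/4<p<1$ your idea does work and can be made rigorous, because
\begin{align}\notag
\absol{E[X_{n+1}\mid\mathcal{F}_n^X]}\le(2p-1)\alpha_{n+1}+(1-\alpha_{n+1})=1-2(1-p)\alpha_{n+1}<1
\end{align}
deterministically whenever $\alpha_{n+1}>0$, which the hypothesis guarantees for all large $n$; then $E[D_{n+1}^2]\ge1-\bigl(1-2(1-p)\alpha_{n+1}\bigr)^2>0$ and $E[M^2]=\sum_k E[D_k^2]/a_k^2>0$. For $p=1$, however, non-degeneracy requires additional assumptions on $q$ and $\{\beta_n\}$, and as written your proof has a hole exactly there.
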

The following theorem states the fluctuation limit of \eqref{nondeg} of Theorem \ref{nondegThm}.
\begin{Thm}[Tokumitsu--Yano {\cite[Theorem 1.3]{TokumitsuYano2025Pre}}]
\label{TY2025}
Let $\{S_n\}_{n=0}^\infty$ be the DERW. Suppose one of the following two conditions:
\begin{enumerate}
\item $3/4<p<1$ and $\ell_{\inf}(\alpha)>\frac{1}{4p-2}.$
\item $p=1$ and $\frac{1}{4p-2}<\ell_{\inf}(\alpha)\le\ell_{\sup}(\alpha)<1$, $0<\ell_{\inf}(\beta)\le\ell_{\sup}(\beta)<\frac{1-\ell_{\sup}(\alpha)}{1-\ell_{\inf}(\alpha)}$.
\end{enumerate} 
Then the following assertion hold:
\begin{align*}
\frac{S_n-E[S_n]-a_nM}{a_n\sqrt{A_\infty^2-A_n^2}}\overset{d}{\to}N(0,1).
\end{align*}
\end{Thm}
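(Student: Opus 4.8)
The plan is to recast the statement as a central limit theorem for the tail of a convergent $L^2$-martingale. Put $M_n := (S_n - E[S_n])/a_n$. Using the defining conditional expectation of $X_{n+1}$ together with the recursion $a_{k} = a_{k-1}\bigl(1 + (2p-1)\alpha_{k}/(k-1)\bigr)$, one checks that $(M_n)$ is a martingale for $(\mathcal{F}_n^X)$ with increments
\[
\Delta M_k := M_k - M_{k-1} = \frac{X_k - E[X_k \mid \mathcal{F}_{k-1}^X]}{a_k},
\]
so that $|\Delta M_k| \le 2/a_k$ and its predictable quadratic variation is $\langle M\rangle_n = \sum_{k=1}^n a_k^{-2}\bigl(1 - E[X_k\mid\mathcal{F}_{k-1}^X]^2\bigr)$. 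Both hypotheses force $\ell_{\inf}(\alpha) > 1/(4p-2)$, i.e.\ the growth index of $a_n$ exceeds $1/2$, whence $\sum_k a_k^{-2} < \infty$, $A_\infty^2 < \infty$, and Theorem \ref{nondegThm} gives $M_n \to M$ almost surely and in $L^2$. Since
\[
\frac{S_n - E[S_n] - a_n M}{a_n\sqrt{A_\infty^2 - A_n^2}} = \frac{M_n - M}{\sqrt{A_\infty^2 - A_n^2}}
\]
and the standard normal law is symmetric, it suffices to prove that $(M - M_n)/s_n \overset{d}{\to} N(0,1)$, where $s_n^2 := A_\infty^2 - A_n^2 \to 0$.

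I would deduce this from a martingale central limit theorem (see \cite{DurrettResnick}) applied, for each $n$, to the martingale-difference array $\{\Delta M_{n+j}/s_n : j \ge 1\}$ adapted to $(\mathcal{F}_{n+j}^X)_{j\ge0}$; its row sum equals $(M-M_n)/s_n$, well defined in $L^2$ because $M\in L^2$. This reduces the proof to two conditions: the conditional-variance condition $s_n^{-2}(\langle M\rangle_\infty - \langle M\rangle_n) \to 1$ in probability, and a conditional Lindeberg condition. The latter is immediate: as $(2p-1)\alpha_k \ge 0$ the sequence $(a_k)$ is nondecreasing, so $\max_{k>n}|\Delta M_k| \le 2/a_{n+1}$, and the regularly varying asymptotics of $a_n$ (Karamata's theory, index in $(1/2,1)$) give $a_{n+1}\, s_n \to \infty$; hence $\max_{k>n}|\Delta M_k|/s_n \to 0$ and every truncation indicator $\mathbf{1}\{|\Delta M_{n+j}| > \varepsilon s_n\}$ vanishes for all large $n$.

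The conditional-variance condition is the crux, since $\langle M\rangle_\infty - \langle M\rangle_n$ is assembled from the \emph{conditional} quantities $1 - E[X_k\mid\mathcal{F}_{k-1}^X]^2$ whereas $A_\infty^2 - A_n^2 = \sum_{k>n} a_k^{-2}(1 - E[X_k]^2)$ uses the \emph{unconditional} ones. The two families of summands differ by $E[X_k]^2 - E[X_k\mid\mathcal{F}_{k-1}^X]^2$, and from the defining conditional mean,
\[
E[X_k \mid \mathcal{F}_{k-1}^X] - E[X_k] = \frac{(2p-1)\alpha_k}{k-1}\bigl(S_{k-1} - E[S_{k-1}]\bigr) = \frac{(2p-1)\alpha_k\, a_{k-1}}{k-1}\,M_{k-1}.
\]
Here $a_{k-1}/(k-1) \to 0$ (index strictly below $1$, using $p<1$ in case (i) and $\ell_{\sup}(\alpha)<1$ in case (ii)) while $M_{k-1} \to M$ almost surely, so this difference, and therefore $E[X_k]^2 - E[X_k\mid\mathcal{F}_{k-1}^X]^2 =: \eta_k$, tends to $0$ almost surely. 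The delicate point is that the denominator must not degenerate: the hypotheses are arranged so that $\liminf_k(1 - E[X_k]^2) > 0$ (this is precisely where the upper bounds on $\alpha$ and $\beta$ in case (ii) enter), whence $A_\infty^2 - A_n^2 \ge c\sum_{k>n}a_k^{-2}$ for some $c>0$. Writing the ratio as $1 + \big(\sum_{k>n}a_k^{-2}\eta_k\big)\big/\big(\sum_{k>n}a_k^{-2}(1-E[X_k]^2)\big)$ with $\eta_k \to 0$ almost surely, a Toeplitz-type estimate for tails of convergent series (the weights $a_k^{-2}$ being summable and the denominator bounded below by $c\sum_{k>n}a_k^{-2}$) shows the ratio tends to $1$ almost surely.

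With both conditions verified, the martingale central limit theorem yields $(M-M_n)/s_n \overset{d}{\to} N(0,1)$, which is the assertion; the scheme is uniform in $3/4<p<1$ and $p=1$, the supplementary constraints in case (ii) serving only to secure the nondegeneracy of $M$ in Theorem \ref{nondegThm} and the positivity of $\liminf_k(1-E[X_k]^2)$. I expect the comparison of the random bracket with the deterministic normalization — establishing the conditional-variance condition together with the lower bound on $\liminf_k(1-E[X_k]^2)$ — to be the main obstacle, the Lindeberg condition being essentially automatic from $|\Delta M_k|\le 2/a_k$.
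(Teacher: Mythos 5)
This paper does not actually prove Theorem \ref{TY2025}: it is quoted as a prior result from Tokumitsu--Yano \cite[Theorem 1.3]{TokumitsuYano2025Pre}, so there is no in-paper proof to compare your argument against. Judged on its own, your outline is essentially sound and is the natural one: it is the tail-martingale analogue of the Durrett--Resnick scheme that this paper uses for Theorems \ref{mThm1} and \ref{mThm2} (and the natural extension of Kubota--Takei \cite{KaT} to the DERW). The algebra is right: $M_n=(S_n-E[S_n])/a_n$ is the martingale of Proposition 11 of \cite{Col}, its increments are $(X_k-E[X_k\mid\mathcal{F}_{k-1}^X])/a_k$, the reduction to $(M-M_n)/s_n\overset{d}{\to}N(0,1)$ with $s_n^2=A_\infty^2-A_n^2$ is exact, and your treatment of the conditional-variance condition --- isolating $\eta_k=E[X_k]^2-E[X_k\mid\mathcal{F}_{k-1}^X]^2$, deducing $\eta_k\to0$ a.s.\ from $M_{k-1}\to M$ and $a_{k-1}/(k-1)\to0$, then absorbing it via the lower bound $1-E[X_k]^2\ge c$ --- is correct.

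Two points need repair, both fixable. First, you invoke regular variation and Karamata theory for $a_n$, but Theorem \ref{TY2025} does not assume $\alpha_n\to\alpha$; only $\liminf$/$\limsup$ bounds are imposed, so $a_n$ need not be regularly varying (contrast Lemmas \ref{Lem1} and \ref{Lem2}, which do assume convergence). Fortunately the only fact you need there, $a_{n+1}s_n\to\infty$, follows from elementary bounds: since $(2p-1)\alpha_k\le1$ one has $a_k/a_{n+1}\le k/(n+1)$ for $k>n$, hence $a_{n+1}^2\sum_{k>n}a_k^{-2}\ge(n+1)^2\sum_{k>n}k^{-2}\ge n+1\to\infty$, and then $a_{n+1}^2s_n^2\ge c\,(n+1)$ once the nondegeneracy constant $c$ is in hand. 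Second, that nondegeneracy, $\liminf_k\rbra{1-E[X_k]^2}>0$, is asserted rather than proved; it does hold, and crudely: $|E[X_k]|\le(2p-1)\alpha_k+(1-\alpha_k)|2\beta_k-1|$ (using $|E[S_{k-1}]|\le k-1$), which is eventually bounded away from $1$ --- in case (i) because $p<1$ and $\ell_{\inf}(\alpha)>0$, in case (ii) because $\ell_{\sup}(\alpha)<1$ and $0<\ell_{\inf}(\beta)\le\ell_{\sup}(\beta)<1$ (the latter following from the hypothesis since $\frac{1-\ell_{\sup}(\alpha)}{1-\ell_{\inf}(\alpha)}\le1$). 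Finally, be careful with the citation for the limit theorem itself: Theorem 2.5 of \cite{DurrettResnick} concerns partial-sum processes, whereas your rows are infinite series $\sum_{j\ge1}\Delta M_{n+j}/s_n$; you should either cite a CLT for martingale tail sums (Heyde-type, as in \cite{KaT}) or add a truncation-plus-$L^2$-remainder step to justify passing to the infinite row.
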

\subsection{Organization of this paper}
This paper is organized as follows. In Section 2, we study limit theorems for the DERW and the asymptotic behavior of characteristic sequences. In Section 3, we give the proof of Theorems \ref{mThm1} and \ref{mThm2}.
\section{Limit theorems for the DERW and the asymptotic behavior of characteristic sequences}
A function $L:(0,\infty)\to(0,\infty)$ is said to be \emph{slowly varying} if 
\begin{align}\notag
\frac{L(tx)}{L(x)}\xrightarrow[x\to\infty]{}1,
\end{align}
for any $t>0$; see \cite{Bingham} for the details. A sequence of positive numbers $\{a_n\}_{n=1}^\infty$ is called \emph{slowly varying} if there exists a slowly varying $L:(0,\infty)\to(0,\infty)$ such that $a_n = L(n)$ for all $n\ge1$. 

Set $g_1=\ell_1=\rho_1=\rho_2=1$ and
\begin{align*}
g_n&:=\prod_{k=1}^{n-1}\left(1+\frac{\alpha(2p-1)}{k}\right)\quad n\ge2,\\
\ell_n&:=\prod_{k=1}^{n-1}\left(1+\frac{(2p-1)(\alpha_{k+1}-\alpha)}{k+(2p-1)\alpha}\right)\quad n\ge2,\\
\rho_n&:=\exp\left(\sum_{k=2}^{n-1}\frac{(2p-1)(\alpha_{k+1}-\alpha)}{k}\right)\quad n\ge3.
\end{align*}
By definition, we have $a_n=g_n\ell_n$ for all $n\ge1$. By Stirling's formula, we have
\begin{align}\label{gasym}
g_n\sim\frac{n^{\alpha(2p-1)}}{\Gamma(\alpha(2p-1)+1)},
\end{align}
where $\Gamma$ denotes the Gamma function.
\begin{Lem}\label{Lem1}
Assume that $\alpha_n\to\alpha\in[0,1]$ and $\beta_n\to\beta\in[0,1]$. Then the following assertions hold:
\begin{enumerate}
\item The sequences $\{\rho_n\}_{n=1}^\infty$ and $\{\ell_n\}_{n=1}^\infty$ are slowly varying.
\item $\{a_n\}_{n=1}^\infty$ is a regularly varying sequence with index $\alpha(2p-1)$ such that
\begin{align}\notag
a_n\sim \frac{n^{\alpha(2p-1)}}{\Gamma(\alpha(2p-1)+1)}\ell_n.
\end{align}
\end{enumerate}
\end{Lem}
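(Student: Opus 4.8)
The plan is to handle $\{\rho_n\}$ first through an explicit Karamata-type integral representation, then transfer slow variation to $\{\ell_n\}$ by showing the two sequences are asymptotically proportional, and finally read off (ii) from the decomposition $a_n=g_n\ell_n$ together with \eqref{gasym}. Throughout, write $b_k:=(2p-1)(\alpha_{k+1}-\alpha)$, so that $b_k\to0$ by hypothesis and $\log\rho_n=\sum_{k=2}^{n-1}b_k/k$ for $n\ge3$. To produce a genuine slowly varying \emph{function} interpolating the $\rho_n$ (as required by the definition adopted here, rather than merely the ratio criterion), I would convert this sum into an integral: set $L_\rho(x):=1$ on $(0,2]$ and $L_\rho(x):=\exp\!\big(\int_2^x (b_{\lfloor u\rfloor}/\lfloor u\rfloor)\,du\big)$ for $x\ge2$. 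Since $\int_k^{k+1}(b_{\lfloor u\rfloor}/\lfloor u\rfloor)\,du=b_k/k$, this gives $L_\rho(n)=\rho_n$ for every $n$. Writing the integrand as $\eps(u)/u$ with $\eps(u):=b_{\lfloor u\rfloor}\,u/\lfloor u\rfloor$, we have $\eps(u)\to0$ because $b_{\lfloor u\rfloor}\to0$ and $u/\lfloor u\rfloor\to1$. Slow variation of $L_\rho$ is then the one-line estimate $L_\rho(tx)/L_\rho(x)=\exp\!\big(\int_x^{tx}\eps(u)/u\,du\big)$ together with $\big|\int_x^{tx}\eps(u)/u\,du\big|\le|\log t|\sup_{u\ge\min(x,tx)}|\eps(u)|\to0$ as $x\to\infty$. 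Hence $\{\rho_n\}$ is slowly varying.

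To pass to $\{\ell_n\}$, I would compare the two logarithms termwise. With $c:=(2p-1)\alpha$, the $k$-th term of $\log\ell_n$ is $\log(1+b_k/(k+c))$. For $k\ge2$ the denominator satisfies $k+c\ge 2-1=1>0$, so a Taylor expansion gives $\log(1+b_k/(k+c))-b_k/k=O(1/k^2)$ (using that $\{b_k\}$ is bounded), a summable error; the single $k=1$ term contributes only a finite constant. Therefore $\log\ell_n-\log\rho_n$ converges, i.e.\ $\ell_n/\rho_n\to c^*\in(0,\infty)$. Since a positive sequence that is asymptotically a positive-constant multiple of a slowly varying sequence is again slowly varying --- concretely, $L_\ell(x):=(\ell_{\lfloor x\rfloor}/\rho_{\lfloor x\rfloor})\,L_\rho(x)$ interpolates $\ell_n$ and satisfies $L_\ell(tx)/L_\ell(x)\to (c^*/c^*)\cdot1=1$ --- the sequence $\{\ell_n\}$ is slowly varying. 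This proves (i).

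For (ii), put $\rho:=\alpha(2p-1)$. The decomposition $a_n=g_n\ell_n$ and the Stirling asymptotic \eqref{gasym} give at once $a_n\sim n^{\rho}\ell_n/\Gamma(\rho+1)$, which is the displayed equivalence. Moreover $a_n/n^{\rho}=(g_n/n^{\rho})\ell_n$ with $g_n/n^{\rho}\to1/\Gamma(\rho+1)>0$ by \eqref{gasym}; being a convergent-to-positive-constant multiple of the slowly varying $\{\ell_n\}$, the sequence $\{a_n/n^{\rho}\}$ is slowly varying (same interpolation device as above). Hence $a_n=n^{\rho}\cdot(a_n/n^{\rho})$ exhibits $\{a_n\}$ as regularly varying with index $\rho=\alpha(2p-1)$.

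The computations are routine and there is no deep obstacle; the one point requiring care is precisely the passage from the defining sums to a genuinely slowly varying interpolating function, so as to match the definition used in this paper rather than only the increment ratio. This is exactly what the integral representation of $L_\rho$ supplies, after which everything reduces to the summability of an $O(1/k^2)$ series and to \eqref{gasym}.
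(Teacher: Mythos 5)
Your proof is correct and follows essentially the same route as the paper's: you show $\{\rho_n\}$ is slowly varying via the bound $\sup_{k\ge x}|(2p-1)(\alpha_{k+1}-\alpha)|\cdot|\log t|\to0$ on the logarithmic increments, deduce $\ell_n\sim c^*\rho_n$ by a second-order (Taylor) comparison of the logarithms with summable $O(1/k^2)$ errors, and then read off (ii) from $a_n=g_n\ell_n$ and \eqref{gasym}. The only cosmetic difference is that you build a continuous Karamata-type integral interpolation $L_\rho$, whereas the paper works directly with the step-function interpolation $x\mapsto\rho_{[x]}$ and verifies the ratio criterion; both satisfy the paper's definition of a slowly varying sequence.
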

\begin{proof}
(i) Since
\begin{align*}
\left|\sum_{k=[x]}^{[xt]-1}\frac{(2p-1)(\alpha_{k+1}-\alpha)}{k}\right|&\le\sup_{k\ge [x]}|\alpha_{k+1}-\alpha|\sum_{k=[x]}^{[xt]-1}\frac{1}{k}\\
&\le\sup_{k\ge [x]}|\alpha_{k+1}-\alpha|\left(\int_{[x]-1}^{[xt]}\frac{dx}{x}\right)\\
&\le\sup_{k\ge [x]}|\alpha_{k+1}-\alpha|\log\frac{[xt]}{[x]-1}\xrightarrow[x\to\infty]{}0,
\end{align*}
we have
\begin{align}\notag
\frac{\rho_{[xt]}}{\rho_{[x]}}=\exp\left(\sum_{k=[x]}^{[xt]-1}\frac{(2p-1)(\alpha_{k+1}-\alpha)}{k}\right)\xrightarrow[x\to\infty]{}1,
\end{align}
which shows that $\{\rho_n\}_{n=1}^\infty$ is a slowly varying sequence. By the Taylor theorem, for any $x>-1$, there exists $\theta_x\in(0,1)$ such that
\begin{align}\notag
\log(1+x)=x-\frac{(\theta_x x)^2}{2}.
\end{align} 
For any $k\ge2$, since $(2p-1)(\alpha_k-\alpha)/(k+(2p-1)\alpha)>-1$, there exists $\theta_k\in(0,1)$ such that
\begin{align*}
\log\left(1+\frac{(2p-1)(\alpha_{k+1}-\alpha)}{k+(2p-1)\alpha}\right)&=\frac{(2p-1)(\alpha_{k+1}-\alpha)}{k+(2p-1)\alpha}-\frac{\theta_k^2}{2}\left(\frac{(2p-1)(\alpha_{k+1}-\alpha)}{k+(2p-1)\alpha}\right)^2\\
&=\frac{(2p-1)(\alpha_{k+1}-\alpha)}{k}-\frac{(2p-1)^2\alpha(\alpha_{k+1}-\alpha)}{k(k+(2p-1)\alpha)}\notag\\
&\quad -\frac{\theta_k^2}{2}\left(\frac{(2p-1)(\alpha_{k+1}-\alpha)}{k+(2p-1)\alpha}\right)^2.
\end{align*}
Summing both sides from $k=2$ to $n-1$ and taking their exponentials, we have
\begin{align*}
\ell_n&\times\left(1 + \frac{(2p-1)(\alpha_1 - \alpha)}{1 + (2p-1)\alpha} \right)^{-1} \notag \\
&= \rho_n\times\exp\left(-\left(\sum_{k=2}^{n-1}\frac{(2p-1)^2\alpha(\alpha_{k+1}-\alpha)}{k(k+(2p-1)\alpha)}+\frac{\theta_k^2}{2} \left(\frac{(2p-1)(\alpha_{k+1} - \alpha)}{k + (2p-1)\alpha}\right)^2\right)\right).
\end{align*}
Set
\begin{align*}
C:=&\exp\left(-\left(\sum_{k=2}^{\infty}\frac{(2p-1)^2\alpha(\alpha_{k+1}-\alpha)}{k(k+(2p-1)\alpha)}+\frac{\theta_k^2}{2}\left(\frac{(2p-1)(\alpha_{k+1}-\alpha)}{k+(2p-1)\alpha}\right)^2\right)\right)\notag\\
&\times\left(1+\frac{(2p-1)(\alpha_1-\alpha)}{1+(2p-1)\alpha}\right),
\end{align*}
then $\ell_n\sim C\rho_n$. Since $\{\rho_n\}_{n=1}^\infty$ is a slowly varying sequence, we see that $\{\ell_n\}_{n=1}^\infty$ is also a slowly varying sequence. 

\noindent(ii) We have
\begin{align}\notag
a_n=g_n\ell_n\sim \frac{n^{\alpha(2p-1)}}{\Gamma(\alpha(2p-1)+1)}\ell_n
\end{align} 
by Eq. \eqref{gasym}. The proof is complete.
\end{proof}
\begin{Thm}\label{SLLN}
Let $\{S_n\}_{n=0}^\infty$ be the DERW. If $\alpha_n\to\alpha$ and $p\cdot\alpha<1$, then
\begin{align}\notag
\lim_{n\to\infty}\frac{S_n}{n}=\frac{(1-\alpha)(2\beta-1)}{1-(2p-1)\alpha},\quad \text{a.s.}
\end{align}
\end{Thm}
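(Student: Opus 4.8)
The plan is to split $S_n/n$ into its mean and its fluctuation, $\frac{S_n}{n}=\frac{E[S_n]}{n}+\frac{S_n-E[S_n]}{n}$, and to treat the two pieces separately. First I would record that the hypotheses force $\gamma:=\alpha(2p-1)<1$: since $\alpha,p\in[0,1]$ we have $\alpha(2p-1)\le 1$ with equality only when $\alpha=p=1$, and that case is excluded by $p\alpha<1$. Consequently, by Lemma \ref{Lem1}, the sequence $\{a_n\}$ is regularly varying of index $\gamma<1$, so that $a_n/n\to0$ and $n/a_n\to\infty$; these two facts drive both halves of the argument.

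For the mean, I would rewrite the defining conditional expectation as $E[S_{n+1}\mid\mathcal{F}_n^X]=\frac{a_{n+1}}{a_n}S_n+(1-\alpha_{n+1})(2\beta_{n+1}-1)$, take expectations, divide by $a_{n+1}$, and iterate to obtain $E[S_n]=a_n\big(E[X_1]+\sum_{k=2}^n\frac{(1-\alpha_k)(2\beta_k-1)}{a_k}\big)$. Because $(1-\alpha_k)(2\beta_k-1)\to(1-\alpha)(2\beta-1)$ and Karamata's theorem \cite{Bingham} gives $\sum_{k=2}^n a_k^{-1}\sim\frac{n}{(1-\gamma)a_n}\to\infty$, a Toeplitz (Stolz--Ces\`aro) averaging argument yields $\sum_{k=2}^n\frac{(1-\alpha_k)(2\beta_k-1)}{a_k}\sim(1-\alpha)(2\beta-1)\,\frac{n}{(1-\gamma)a_n}$. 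Multiplying back by $a_n/n$ and using $a_nE[X_1]/n\to0$ gives $E[S_n]/n\to\frac{(1-\alpha)(2\beta-1)}{1-\gamma}$, which is exactly the claimed constant since $1-\gamma=1-(2p-1)\alpha$.

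For the fluctuation, I would introduce the martingale differences $\epsilon_k:=X_k-E[X_k\mid\mathcal{F}_{k-1}^X]$ (with $\epsilon_1:=X_1-E[X_1]$), which satisfy $E[\epsilon_k^2\mid\mathcal{F}_{k-1}^X]=1-E[X_k\mid\mathcal{F}_{k-1}^X]^2\le1$. Unfolding $S_n-E[S_n]=\frac{a_n}{a_{n-1}}(S_{n-1}-E[S_{n-1}])+\epsilon_n$ gives the representation $S_n-E[S_n]=a_n\sum_{k=1}^n\epsilon_k/a_k$. The series $\sum_k\epsilon_k/k$ is an $L^2$-bounded martingale, since $\sum_k E[\epsilon_k^2]/k^2\le\sum_k k^{-2}<\infty$, so it converges almost surely by the martingale convergence theorem. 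Taking $b_n:=n/a_n$, which is nondecreasing because $b_{n+1}/b_n=(n+1)/(n+(2p-1)\alpha_{n+1})\ge1$ (as $(2p-1)\alpha_{n+1}\le1$) and tends to infinity, Kronecker's lemma then yields $b_n^{-1}\sum_{k=1}^n\epsilon_k/a_k\to0$, that is, $\frac{S_n-E[S_n]}{n}=\frac{a_n}{n}\sum_{k=1}^n\epsilon_k/a_k\to0$ almost surely.

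Adding the two limits completes the proof. I expect the main obstacle to be the mean computation: one must combine Karamata's theorem (to evaluate $\sum_k a_k^{-1}$) with a Toeplitz averaging step (to carry the limit of $(1-\alpha_k)(2\beta_k-1)$ through the weighted sum), and it is precisely here that the restriction $\gamma<1$ is essential. By contrast the fluctuation part is uniform across the diffusive, critical, and superdiffusive regimes, because the bound $E[\epsilon_k^2]\le1$ makes $\sum_k\epsilon_k/k$ converge no matter how fast $a_n$ grows.
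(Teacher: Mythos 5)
Your proof is correct, and its core---the evaluation of $E[S_n]/n$---is essentially the paper's own argument: both split $S_n/n$ into mean and fluctuation, and both compute the mean by combining Karamata's theorem for regularly varying sequences (giving $\sum_{k\le n}a_k^{-1}\sim n/((1-\gamma)a_n)$, valid precisely because $-\gamma>-1$) with a Stolz--Ces\`aro/Toeplitz step that carries the limit of $(1-\alpha_k)(2\beta_k-1)$ through the weighted sum. The genuine difference is that you make self-contained what the paper outsources to Coletti et al.\ \cite{Col}: (i) the representation $E[S_n]=a_n\bigl(E[X_1]+\sum_{k=2}^n(1-\alpha_k)(2\beta_k-1)/a_k\bigr)$, which you derive by iterating the defining recursion (the paper cites Proposition 9 and Lemma 8 of \cite{Col} for this, with the $o(1)$ term there being your $a_nE[X_1]/n$), and (ii) the almost sure statement $(S_n-E[S_n])/n\to0$, which you prove via the identity $S_n-E[S_n]=a_n\sum_{k\le n}\epsilon_k/a_k$, the $L^2$-boundedness of the martingale $\sum_k\epsilon_k/k$ (using $E[\epsilon_k^2]\le1$), and Kronecker's lemma with the weights $b_n=n/a_n$, whose monotonicity and divergence you correctly verify (the paper simply cites Theorem 1 of \cite{Col}). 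The paper's route is shorter; yours is self-contained and makes transparent where the hypothesis $p\alpha<1$ enters: only to guarantee $\gamma=\alpha(2p-1)<1$, hence $n/a_n\uparrow\infty$ and the admissible Karamata exponent.

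One cosmetic point: the intermediate equivalence $\sum_{k=2}^n(1-\alpha_k)(2\beta_k-1)/a_k\sim(1-\alpha)(2\beta-1)\,n/((1-\gamma)a_n)$ is an abuse of notation when $(1-\alpha)(2\beta-1)=0$ (e.g.\ $\beta=1/2$ or $\alpha=1$); the correct statement there is that the sum is $o(n/a_n)$. The paper sidesteps this by writing the quantity as the product of the Stolz--Ces\`aro ratio and the Karamata factor, each with its own limit; your final conclusion is unaffected, but that step should be phrased the same way.
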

\begin{proof}
By Theorem 1 of \cite{Col}, we have
\begin{align}\notag
\lim_{n\to\infty}\frac{S_n-E[S_n]}{n}=0,\quad\text{a.s.}
\end{align}
Therefore, it suffices to show 
\begin{align}\notag
\frac{E[S_n]}{n}\to\frac{(1-\alpha)(2\beta-1)}{1-(2p-1)\alpha}\quad\text{as $n\to\infty$}.
\end{align} 
By Proposition 9 of \cite{Col} and Lemma 8 of \cite{Col}, we have
\begin{align}\notag
\frac{E[S_n]}{n}=o(1)+\frac{a_n}{n}\sum_{k=1}^n\frac{(1-\alpha_k)(2\beta_k-1)}{a_k}.
\end{align}
Since $\alpha(2p-1)<1$, we have $\sum_{n=1}^\infty a_n^{-1}=\infty$ by Lemma \ref{Lem1} (ii). By Stolz--Ces\`aro theorem, we have
\begin{align}\label{Stolz}
\lim_{n\to\infty}\frac{\sum_{k=1}^n(1-\alpha_k)(2\beta_k-1)a_k^{-1}}{\sum_{k=1}^na_k^{-1}}=(1-\alpha)(2\beta-1).
\end{align}
Since $-\alpha(2p-1)>-1$, by Proposition 1.5.8 of \cite{Bingham}, we have
\begin{align}\notag
\frac{a_n}{n}\sum_{k=1}^n\frac{1}{a_k}&\sim \frac{1}{n}\frac{n^{\alpha(2p-1)}\ell_{n}}{\Gamma(\alpha(2p-1)+1)}\frac{1}{1-\alpha(2p-1)}\frac{\Gamma(\alpha(2p-1)+1)}{n^{\alpha(2p-1)-1}\ell_n}\\
\label{Karamata}&\sim \frac{1}{1-\alpha(2p-1)}.
\end{align}
By Eq. \eqref{Stolz} and Eq. \eqref{Karamata}, we have
\begin{align*}
\frac{a_n}{n}\sum_{k=1}^n\frac{(1-\alpha_k)(2\beta_k-1)}{a_k}&=\frac{a_n}{n}\frac{\sum_{k=1}^n(1-\alpha_k)(2\beta_k-1)a_k^{-1}}{\sum_{k=1}^na_k^{-1}}\sum_{k=1}^n\frac{1}{a_k}\\
&\xrightarrow[n\to\infty]{}
\frac{(1-\alpha)(2\beta-1)}{1-(2p-1)\alpha}.
\end{align*}
The proof is complete.
\end{proof}
\begin{Lem}\label{Lem2}
Assume that $\alpha_n\to\alpha$ and $\beta_n\to\beta$. Then the following assertions hold:
\begin{enumerate}
\item Suppose one of the five conditions in Theorem \ref{mThm1}. Then
\begin{align}\notag
A_n^2\sim C_{\alpha,\beta,p}\Gamma(\alpha(2p-1)+1)^2n^{1-2\alpha(2p-1)}\ell_{n}^{-2}.
\end{align}
Consequently, $A_n^2$ is regularly varying with index $1-2\alpha(2p-1)$. Here $C_{\alpha,\beta,p}$ has been given in \eqref{Const}.
\item If $p>3/4$ and $\alpha>\frac{1}{4p-2}$, then
\begin{align}\notag
A_\infty^2-A_n^2\sim C_{\alpha,\beta,p}\Gamma(\alpha(2p-1)+1)^2n^{1-2\alpha(2p-1)}\ell_{n}^{-2}.
\end{align}
Consequently, $A_\infty^2-A_n^2$ is  regularly varying with index $1-2\alpha(2p-1)$. Here, 
\begin{align}\label{Const2}
C_{\alpha,\beta,p}:=\frac{1}{2\alpha(2p-1)-1}\cdot\left(1-\left(\frac{(1-\alpha)(2\beta-1)}{(1-\alpha(2p-1)}\right)^2\right).
\end{align}
\end{enumerate}
\end{Lem}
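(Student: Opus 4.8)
The plan is to reduce both assertions to the asymptotics of the mean increments $E[X_k]$, combined with Lemma~\ref{Lem1} and Karamata's theorem, and then to separate the two parts according to the sign of $1-2\alpha(2p-1)$. First I would identify the limit $m:=\lim_k E[X_k]$. Taking expectations in the defining conditional-mean relation gives
\[
E[X_{n+1}]=\frac{\alpha_{n+1}(2p-1)}{n}\,E[S_n]+(1-\alpha_{n+1})(2\beta_{n+1}-1).
\]
By Theorem~\ref{SLLN} (whose proof in fact establishes $E[S_n]/n\to m$ with $m=\frac{(1-\alpha)(2\beta-1)}{1-\alpha(2p-1)}$), together with $\alpha_n\to\alpha$ and $\beta_n\to\beta$, I would pass to the limit to obtain $E[X_{n+1}]\to \alpha(2p-1)\,m+(1-\alpha)(2\beta-1)=m$. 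Consequently $1-E[X_k]^2\to 1-m^2=1-\left(\frac{(1-\alpha)(2\beta-1)}{1-\alpha(2p-1)}\right)^2$, which is precisely the bracketed factor appearing in \eqref{Const} and \eqref{Const2}.

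Next, Lemma~\ref{Lem1}(ii) and \eqref{gasym} give $a_k^{-2}\sim\Gamma(\alpha(2p-1)+1)^2\,k^{-2\alpha(2p-1)}\ell_k^{-2}$, so $\{a_k^{-2}\}$ is regularly varying of index $-2\alpha(2p-1)$, and I would compare $A_n^2=\sum_{k=1}^n(1-E[X_k]^2)a_k^{-2}$ with $B_n^2=\sum_{k=1}^n a_k^{-2}$. For part~(i), a short case check shows that each of the five conditions of Theorem~\ref{mThm1} forces $\alpha(2p-1)<1/2$; hence the index satisfies $-2\alpha(2p-1)>-1$, the series $\sum a_k^{-2}$ diverges, and Karamata's theorem (Proposition~1.5.8 of \cite{Bingham}), applied exactly as in \eqref{Karamata}, yields
\[
B_n^2\sim\frac{\Gamma(\alpha(2p-1)+1)^2}{1-2\alpha(2p-1)}\,n^{1-2\alpha(2p-1)}\ell_n^{-2}.
\]
Since $1-E[X_k]^2\to 1-m^2$ and $B_n^2\to\infty$, the Stolz--Ces\`aro theorem (as in \eqref{Stolz}) gives $A_n^2/B_n^2\to 1-m^2$, so $A_n^2\sim(1-m^2)B_n^2$; this is the claimed equivalence with $C_{\alpha,\beta,p}=(1-m^2)/(1-2\alpha(2p-1))$, matching \eqref{Const}, and regular variation of index $1-2\alpha(2p-1)$ is inherited from $n^{1-2\alpha(2p-1)}\ell_n^{-2}$.

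For part~(ii), the hypotheses $p>3/4$ and $\alpha>\frac{1}{4p-2}$ give $\alpha(2p-1)>1/2$, so the index is $<-1$, the series $\sum a_k^{-2}$ converges, and $A_\infty^2<\infty$. I would instead invoke the convergent-series (tail) form of Karamata's theorem (see \cite{Bingham}) to get
\[
B_\infty^2-B_n^2\sim\frac{\Gamma(\alpha(2p-1)+1)^2}{2\alpha(2p-1)-1}\,n^{1-2\alpha(2p-1)}\ell_n^{-2},
\]
and the elementary tail-averaging statement (if $w_k>0$ with $\sum_{k>n}w_k\to0$ and $v_k\to L$, then $\sum_{k>n}v_kw_k\big/\sum_{k>n}w_k\to L$), applied with $v_k=1-E[X_k]^2$ and $w_k=a_k^{-2}$, to conclude $A_\infty^2-A_n^2\sim(1-m^2)(B_\infty^2-B_n^2)$. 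This is the stated equivalence with $C_{\alpha,\beta,p}=(1-m^2)/(2\alpha(2p-1)-1)$, matching \eqref{Const2}, and the regular variation claim follows as before.

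The hard part is the first step: upgrading the law-of-large-numbers information about $S_n$ to the convergence of the one-step mean $E[X_k]$, which must go through the conditional-mean recursion and be justified uniformly in all the listed cases. In the superdiffusive regime of part~(ii) one should in particular confirm directly, via the recursion and Karamata (noting $p\alpha<1$ outside the degenerate corner $p=\alpha=1$), that $E[S_n]/n\to m$ persists, since whether $\sum a_k^{-1}$ diverges or converges depends on $\alpha(2p-1)\lessgtr 1$. The remaining ingredients — the two Karamata computations and the Stolz--Ces\`aro / tail-averaging replacement of $1-E[X_k]^2$ by $1-m^2$ — are routine given Lemma~\ref{Lem1}.
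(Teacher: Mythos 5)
Your proposal is correct and follows essentially the same route as the paper's proof: identify $\lim_k\bigl(1-E[X_k]^2\bigr)=1-m^2$ with $m=\frac{(1-\alpha)(2\beta-1)}{1-\alpha(2p-1)}$ via Theorem \ref{SLLN} (equivalently, $E[S_n]/n\to m$ fed through the conditional-mean recursion), then apply Karamata's theorem --- Proposition 1.5.8 of \cite{Bingham} for the divergent sum in part (i) and Proposition 1.5.10 for the convergent tail in part (ii). The only cosmetic difference is that you factor through $B_n^2$ and invoke Stolz--Ces\`aro / tail-averaging to replace $1-E[X_k]^2$ by its limit, whereas the paper applies Karamata directly to the summand $a_k^{-2}(1-E[X_k]^2)$, which is the same estimate in substance.
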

\begin{proof}
(i) By Theorem \ref{SLLN}, we have
\begin{align*}
\lim_{n\to\infty}(1-E[X_n]^2)&=1-\left(\frac{(2p-1)\alpha(1-\alpha)(2\beta-1)}{1-(2p-1)\alpha}+(1-\alpha)(2\beta-1)\right)^2\\
&=1-\frac{(1-\alpha)^2(2\beta-1)^2}{(1-(2p-1)\alpha)^2}.
\end{align*}
Since $-2\alpha(2p-1)>-1$, by Proposition 1.5.8 of \cite{Bingham}, we have
\begin{align*}
A_n^2&\sim \left(1-\frac{(1-\alpha)^2(2\beta-1)^2}{(1-(2p-1)\alpha)^2}\right)\frac{\Gamma(\alpha(2p-1)+1)^2}{1-2\alpha(2p-1)}n^{1-2\alpha(2p-1)}\ell_n^{-2}\\
&\sim C_{\alpha,\beta,p}\Gamma(\alpha(2p-1)+1)^2n^{1-2\alpha(2p-1)}\ell_{n}^{-2}.
\end{align*}
(ii) Since $-2\alpha(2p-1)<-1$, by Propositon 1.5.10 of \cite{Bingham}, we have
\begin{align}\notag
A_\infty^2-A_n^2\sim C_{\alpha,\beta,p}\Gamma(\alpha(2p-1)+1)^2n^{1-2\alpha(2p-1)}\ell_{n}^{-2}.
\end{align}
The proof is complete.
\end{proof}
\begin{Cor}
Let $\{S_n\}_{n=0}^\infty$ be the DERW. Assume that $\alpha_n\to\alpha$ and $\beta_n\to\beta$, and suppose one of the following two conditions:
\begin{enumerate}
\item $3/4<p<1$ and $\alpha>\frac{1}{4p-2}.$
\item $p=1$ and $\frac{1}{4p-2}<\alpha<1$, $0<\beta<1$.
\end{enumerate} 
Then the following assertion hold:
\begin{align*}
\frac{S_n-E[S_n]-a_nM}{\sqrt{n}}\overset{d}{\to}N(0,C_{\alpha,\beta,p}).
\end{align*}
Here, $C_{\alpha,\beta,p}$ is the constant defined in \eqref{Const2}.
\end{Cor}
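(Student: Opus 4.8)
The plan is to obtain this Corollary as a rescaled version of Theorem~\ref{TY2025}, the only genuine work being to check that the hypotheses match and to compute the deterministic normalizing ratio. First I would verify that conditions (i) and (ii) of the Corollary are special cases of conditions (i) and (ii) of Theorem~\ref{TY2025}. Indeed, the assumption $\alpha_n\to\alpha$ forces $\ell_{\inf}(\alpha)=\ell_{\sup}(\alpha)=\alpha$, so $\alpha>\frac{1}{4p-2}$ is exactly $\ell_{\inf}(\alpha)>\frac{1}{4p-2}$; likewise $\beta_n\to\beta$ gives $\ell_{\inf}(\beta)=\ell_{\sup}(\beta)=\beta$, and under $p=1$ the constraint $\ell_{\sup}(\beta)<\frac{1-\ell_{\sup}(\alpha)}{1-\ell_{\inf}(\alpha)}$ degenerates to $\beta<1$. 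Hence Theorem~\ref{TY2025} applies and yields
\begin{align}\notag
\frac{S_n-E[S_n]-a_nM}{a_n\sqrt{A_\infty^2-A_n^2}}\overset{d}{\to}N(0,1).
\end{align}

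The second step is to identify the deterministic factor relating the two normalizations. Writing
\begin{align}\notag
\frac{S_n-E[S_n]-a_nM}{\sqrt{n}}=\frac{S_n-E[S_n]-a_nM}{a_n\sqrt{A_\infty^2-A_n^2}}\cdot\frac{a_n\sqrt{A_\infty^2-A_n^2}}{\sqrt{n}},
\end{align}
I would evaluate the limit of the second factor, which is a purely deterministic sequence since $a_n$ and $A_n^2$ are deterministic. Combining the asymptotics $a_n\sim\frac{n^{\alpha(2p-1)}}{\Gamma(\alpha(2p-1)+1)}\ell_n$ from Lemma~\ref{Lem1}(ii) with $A_\infty^2-A_n^2\sim C_{\alpha,\beta,p}\Gamma(\alpha(2p-1)+1)^2n^{1-2\alpha(2p-1)}\ell_n^{-2}$ from Lemma~\ref{Lem2}(ii), the $\Gamma$-factors, the slowly varying factors $\ell_n$, and the powers of $n$ all cancel, giving
\begin{align}\notag
a_n^2\left(A_\infty^2-A_n^2\right)\sim C_{\alpha,\beta,p}\,n,\qquad\text{so that}\qquad\frac{a_n\sqrt{A_\infty^2-A_n^2}}{\sqrt{n}}\xrightarrow[n\to\infty]{}\sqrt{C_{\alpha,\beta,p}}.
\end{align}

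Finally I would conclude by Slutsky's theorem: the first factor converges in distribution to $N(0,1)$ and the second is deterministic and converges to $\sqrt{C_{\alpha,\beta,p}}$, so their product converges in distribution to $\sqrt{C_{\alpha,\beta,p}}\,N(0,1)=N(0,C_{\alpha,\beta,p})$. The argument is essentially bookkeeping; the only point requiring care is the cancellation in the ratio $a_n^2(A_\infty^2-A_n^2)/n$, where one must track the exponent $2\alpha(2p-1)+(1-2\alpha(2p-1))=1$ and confirm that the reciprocal slowly varying factors $\ell_n^{\pm2}$ cancel exactly, leaving a genuine constant. That this constant is positive is guaranteed by $\alpha>\frac{1}{4p-2}$, which gives $2\alpha(2p-1)-1>0$, together with the automatic bound $\big(\frac{(1-\alpha)(2\beta-1)}{1-\alpha(2p-1)}\big)^2\le1$ coming from $1-E[X_n]^2\ge0$; both factors in \eqref{Const2} are then nonnegative.
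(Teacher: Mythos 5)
Your proposal is correct and follows essentially the same route as the paper: the paper likewise combines Lemma~\ref{Lem1}~(ii) and Lemma~\ref{Lem2}~(ii) to get $a_n\sqrt{A_\infty^2-A_n^2}\sim\sqrt{C_{\alpha,\beta,p}}\,\sqrt{n}$ and then invokes Theorem~\ref{TY2025}. Your write-up simply makes explicit two steps the paper leaves implicit --- the verification that $\alpha_n\to\alpha$, $\beta_n\to\beta$ reduce the hypotheses of Theorem~\ref{TY2025} to those of the Corollary, and the final appeal to Slutsky's theorem.
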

\begin{proof}
By Lemma \ref{Lem1} (ii) and Lemma \ref{Lem2} (ii),
\begin{align}\notag
a_n\sqrt{A_\infty^2-A_n^2}\sim \sqrt{n}\cdot\sqrt{C_{\alpha,\beta,p}}.
\end{align}
Therefore, it follows from Theorem \ref{TY2025}.
\end{proof}
\section{Proof of our main theorems}
Set $M_0:=0$ and
\begin{align*}
M_n:=\frac{S_n-E[S_n]}{a_n}\quad \text{for}\ n\ge1,\\
Y_n:=M_{n}-M_{n-1}\quad \text{for} \ n\ge1.
\end{align*}
By Proposition 11 of \cite{Col}, we see that $\{M_n\}_{n=1}^\infty$ is a martingale. Since $Y_n=M_n-E[M_n|\mathcal{F}_{n-1}]\ \text{a.s.}$, we have
\begin{align*}
Y_n=\frac{X_n-E[X_n|\mathcal{F}_{n-1}]}{a_n}\quad \text{a.s.}
\end{align*}
for $n\ge1$. By the definition of the DERW, we have
\begin{align}\label{Eq}
|Y_n|\le\frac{2}{a_n}\quad\text{a.s.}
\end{align}
for $n\ge1$. Also, set
\begin{align}\notag
X_{n,k}:=\frac{Y_k}{A_n}
\end{align}
for $n,k\ge1$. Since $\{Y_k\}_{k=1}^\infty$ is martingale difference sequence, $\{X_{n,k}\}_{k=1}^\infty$ is also a martingale difference sequence for each fixed $n$.

\begin{proof}[Proof of Theorem \ref{mThm1}]
By Theorem 2.5 of \cite{DurrettResnick}, Theorem \ref{mThm1} can be proved by checking the following two conditions:
\begin{enumerate}
\item[(a)] For any $t>0$, $\displaystyle
\sum_{k=1}^{[nt]}E[X_{n,k}^2|\mathcal{F}_{k-1}^X]\xrightarrow[n\to\infty]{\mathrm{P}}\varphi(t)$ with $P(\text{$\varphi$ is continuous})=1$.
\item[(b)] For any $\varepsilon>0$, $\displaystyle
\sum_{k=1}^{n}E[X_{n,k}^21_{\{|X_{n,k}|>\varepsilon\}}|\mathcal{F}_{k-1}^X]\xrightarrow[n\to\infty]{\mathrm{P}}0$.
\end{enumerate}
Condition (b) has already been proven in the proof of Theorem 3 in \cite{Col}, so it suffices to prove only Condition (a). By Theorem 1 of \cite{Col}, 
\begin{align}\notag
E[X_{n}|\mathcal{F}_{n-1}^X]-E[X_n]=(2p-1)\alpha_{n}\frac{S_{n-1}-E[S_{n-1}]}{n-1}=o(1)\quad \text{a.s.}
\end{align}
Put
\begin{align}\notag
\xi_n:=E[X_{n}|\mathcal{F}_{n-1}^X]-E[X_n],\quad n\ge1.
\end{align}
Then we have
\begin{align*}
E[Y_n^2|\mathcal{F}_{n-1}^X]&=\frac{1}{a_n^2}E[1-2X_nE[X_n|\mathcal{F}_{n-1}^X]+E[X_n|\mathcal{F}_{n-1}^X]^2|\mathcal{F}_{n-1}^X]\\
&=\frac{1}{a_n^2}(1-E[X_n|\mathcal{F}_{n-1}^X]^2)\\
&=\frac{1}{a_n^2}(1-E[X_n]^2-2E[X_n]\xi_{n}-\xi_{n}^2).
\end{align*}
Thus, we have
\begin{align}\notag
\sum_{k=1}^{[nt]}E[X_{n,k}^2|\mathcal{F}_{k-1}^X]&=\frac{A_{[nt]}^2}{A_n^2}-\frac{1}{A_n^2}\left(\sum_{k=1}^{[nt]}\frac{2\xi_kE[X_k]+\xi_k^2}{a_k^2}\right). 
\end{align}
Let $\varepsilon>0$. By $2\xi_kE[X_k]+\xi_k^2\to0$ as $n\to\infty$, There exists $n_0$ such that for all $n\ge n_0$, $|2\xi_nE[X_n]+\xi_n^2|<\varepsilon$. Also, by Lemma \ref{Lem2} (i), $B_{[nt]}/A_{n}$ is bounded and $A_\infty=\infty$, we have
\begin{align*}
\left|\frac{1}{A_n^2}\left(\sum_{k=1}^{[nt]}\frac{2\xi_kE[X_k]+\xi_k^2}{a_k^2}\right)\right|&=\frac{1}{A_n^2}\left|\sum_{k=1}^{n_0-1}\frac{2\xi_kE[X_k]+\xi_k^2}{a_k^2}+\sum_{k=n_0}^{[nt]}\frac{2\xi_kE[X_k]+\xi_k^2}{a_k^2}\right|\\
&\le \frac{1}{A_n^2}\sum_{k=1}^{n_0-1}\frac{2\xi_kE[X_k]+\xi_k^2}{a_k^2}+\varepsilon\cdot\sup_{n\ge1}\frac{B_{[nt]}^2}{A_n^2}\\
&\xrightarrow[n\to\infty]{}\varepsilon \cdot \sup_{n\ge1}\frac{B_{[nt]}^2}{A_n^2}\xrightarrow[\varepsilon\downarrow0]{}0.
\end{align*}
Moreover, by Lemma \ref{Lem2} (i), we have
\begin{align}\notag
\frac{A_{[nt]}^2}{A_n^2}\sim\frac{[nt]^{1-2\alpha(2p-1)}}{n^{1-2\alpha(2p-1)}}\frac{\ell_{[nt]}^{-2}}{\ell_n^{-2}}\sim t^{1-2\alpha(2p-1)}:=\varphi(t).
\end{align}
Then Condition (a) is satisfied. Therefore, 
\begin{align}\notag
\frac{M_{[nt]}}{A_n}=\frac{S_{[nt]}-E[S_{[nt]}]}{a_{[nt]}A_n}\Longrightarrow B\circ\varphi(t),
\end{align}
where $B$ is a standard Brownian motion. By Lemma \ref{Lem1} (ii) and Lemma \ref{Lem2} (i),
\begin{align*}
a_{[nt]}A_n&\sim\frac{[nt]^{\alpha(2p-1)}}{\Gamma(\alpha(2p-1)+1)}\ell_{[nt]}\cdot \sqrt{C_{\alpha,\beta,p}}\Gamma(\alpha(2p-1)+1)n^{1/2-\alpha(2p-1)}\ell_{n}^{-1}\\
&\sim\sqrt{C_{\alpha,\beta,p}}t^{\alpha(2p-1)}\sqrt{n}.
\end{align*}
Therefore, we obtain the desired convergence \eqref{FCLT}, where $W_t:=\sqrt{C_{\alpha,\beta,p}}t^{\alpha(2p-1)}B\circ\varphi(t)$.
\end{proof}

\begin{proof}[Proof of Theorem \ref{mThm2}]
By Theorem 2.5 of \cite{DurrettResnick}, Theorem \ref{mThm2} can be proved by checking the following two conditions:
\begin{enumerate}
\item[(a$'$)] For any $t>0$, $\displaystyle
\sum_{k=1}^{[n^t]}E[X_{n,k}^2|\mathcal{F}_{k-1}^X]\xrightarrow[n\to\infty]{\mathrm{P}}\psi(t)$ with $P(\text{$\psi$ is continuous})=1$.
\item[(b)] For any $\varepsilon>0$, $\displaystyle
\sum_{k=1}^{n}E[X_{n,k}^21_{\{|X_{n,k}|>\varepsilon\}}|\mathcal{F}_{k-1}^X]\xrightarrow[n\to\infty]{\mathrm{P}}0$.
\end{enumerate}
When $\alpha_n\equiv\frac{1}{4p-2}$, since $a_n\sim 4n^{1/2}/\pi$, it follows from Theorem \ref{SLLN} that 
\begin{align}\notag
A_n^2&\sim \frac{\pi^2}{16}\left(1-4\left(1-\frac{1}{4p-2}\right)^2(2\beta-1)^2\right)\log n.
\end{align}
By an argument similar to the proof of Theorem \ref{mThm1}, we have
\begin{align}\notag
\sum_{k=1}^{[n^t]}E[X_{n,k}^2|\mathcal{F}_{k-1}^X]\sim\frac{A_{[n^t]}^2}{A_n^2}\sim t.
\end{align}
Then Condition (a$'$) is satisfied. 

Next we check Condition (b). Because $a_n\ge1$ and Eq. \eqref{Eq}, it follows that $|X_{n,k}|\le\frac{2}{A_n}$ a.s. Since $A_n\to\infty$ as $n\to\infty$, for any fixed $\varepsilon>0$, we have $P(|X_{n,k}|>\varepsilon)=0$ for large $n$. Then Condition (b) is satisfied. The rest of the proof is similar to that of Theorem \ref{mThm1}, and therefore we obtain the desired convergence \eqref{FCLT2}.
\end{proof}
\noindent
\section*{Acknowledgements} 
The author would like to thank Professor Kouji Yano, for his careful guidance and great support. The author thanks Ryoichiro Noda, for his insightful question and valuable advice, which initiated the author's research on a functional limit theorem for the DERW.


\begin{thebibliography}{10}

\bibitem{BaurBertoin}
E.~Baur and J.~Bertoin.
\newblock Elephant random walks and their connection to {P}{\'o}lya-type urns.
\newblock {\em Phys. Rev. E}, 94(5):052134, 2016.

\bibitem{Ber}
B.~Bercu.
\newblock A martingale approach for the elephant random walk.
\newblock {\em J. Phys. A}, 51(1):015201, 16, 2018.

\bibitem{Bingham}
N.~H. Bingham, C.~M. Goldie, and J.~L. Teugels.
\newblock {\em Regular variation}, volume~27 of {\em Encyclopedia of
  Mathematics and its Applications}.
\newblock Cambridge University Press, Cambridge, 1989.

\bibitem{Col}
C.~F. Coletti, L.~R. de~Lima, R.~J. Gava, and D.~A. Luiz.
\newblock Limit theorems for a random walk with memory perturbed by a dynamical
  system.
\newblock {\em J. Math. Phys.}, 61(10):103303, 11, 2020.

\bibitem{Col2}
C.~F. Coletti, R.~Gava, and G.~M. Sch\"utz.
\newblock Central limit theorem and related results for the elephant random
  walk.
\newblock {\em J. Math. Phys.}, 58(5):053303, 8, 2017.

\bibitem{DurrettResnick}
R.~Durrett and S.~I. Resnick.
\newblock Functional limit theorems for dependent variables.
\newblock {\em Ann. Probab.}, 6(5):829--846, 1978.

\bibitem{Gui}
N.~Guillotin.
\newblock Asymptotics of a dynamic random walk in a random scenery. {I}. {L}aw
  of large numbers.
\newblock {\em Ann. Inst. H. Poincar\'e{} Probab. Statist.}, 36(2):127--151,
  2000.

\bibitem{Gui2}
N.~Guillotin-Plantard and R.~Schott.
\newblock {\em Dynamic random walks}.
\newblock Elsevier B. V., Amsterdam, 2006.
\newblock Theory and applications.

\bibitem{KaT}
N.~Kubota and M.~Takei.
\newblock Gaussian fluctuation for superdiffusive elephant random walks.
\newblock {\em J. Stat. Phys.}, 177(6):1157--1171, 2019.

\bibitem{Sch}
G.~M. Sch\"utz and S.~Trimper.
\newblock Elephants can always remember: Exact long-range memory effects in a
  non-{M}arkovian random walk.
\newblock {\em Phys. Rev. E}, 70:045101, Oct 2004.

\bibitem{Sio}
Y.~Shiozawa.
\newblock Limiting behaviors of generalized elephant random walks.
\newblock {\em J. Math. Phys.}, 63(12):Paper No. 123504, 22, 2022.

\bibitem{TokumitsuYano2025Pre}
G.~Tokumitsu and K.~Yano.
\newblock Limit theorems for the fluctuation of the dynamic elephant random
  walk in the superdiffusive case.
\newblock {\em J. Math. Phys.}, 66(6):Paper No. 063506, 2025.

\end{thebibliography}
\end{document}